\DeclareMathOperator{\Aut}{Aut}
\theoremstyle{plain}
\newtheorem{theorem}{Theorem}[section]
\newtheorem{lemma}[theorem]{Lemma}
\newtheorem{corollary}[theorem]{Corollary}
\newtheorem{proposition}[theorem]{Proposition}
\theoremstyle{definition}
\newtheorem{question}[theorem]{Question}
\newtheorem{remark}[theorem]{Remark}
\newtheorem{definition}[theorem]{Definition}
\definecolor{applegreen}{rgb}{0.55, 0.71, 0.0}
\definecolor{dgreen}{rgb}{0.0, 0.5, 0.0}
\newcommand{\Z}{\ensuremath{\mathbb{Z}}}
\title[]{On quotients of Coxeter groups}
\author{Philip Möller and Olga Varghese}
\date{\today}
\address{Philip M\"oller\\
Department of Mathematics\\
University of M\"unster\\ 
Einsteinstra\ss e 62\\
48149 M\"unster (Germany)}
\email{philip.moeller@uni-muenster.de}
\address{Olga Varghese\\ Institute of Mathematics, Heinrich-Heine-University Düsseldorf, Universitätsstra{\upshape{\ss}}e 1, 40225, Düsseldorf (Germany)}
\email{olga.varghese@hhu.de}
\begin{document}
	
\pagenumbering{arabic}
	
	\begin{abstract}
	A group $G$ is said to be \emph{just infinite} if $G$ itself is infinite but all proper quotients of $G$ are finite. We show that a Coxeter group $W_\Gamma$ is just infinite if and only if $\Gamma$ is isomorphic to one of the following graphs: $\widetilde{A}_1$, $\widetilde{A}_n (n\geq 2)$, $\widetilde{B}_n (n\geq 3)$, $\widetilde{C}_n (n\geq 2)$, $\widetilde{D}_n(n\geq 4)$, $\widetilde{E}_6$, $\widetilde{E}_7$, $\widetilde{E}_8$, $\widetilde{F}_4$ or $\widetilde{G}_2$. 
    
    Moreover, we show that just infinite Coxeter groups are profinitely rigid among all Coxeter groups.

\vspace{1cm}
\hspace{-0.6cm}
{\bf Key words.} \textit{Coxeter groups, just infinite groups, profinite rigidity.}	
\medskip

\medskip
\hspace{-0.5cm}{\bf 2010 Mathematics Subject Classification.} Primary: 20F55; Secondary: 20E26.

\thanks{The first author is funded
	by a stipend of the Studienstiftung des deutschen Volkes and  by the Deutsche Forschungsgemeinschaft (DFG, German Research Foundation) under Germany's Excellence Strategy EXC 2044--390685587, Mathematics M\"unster: Dynamics-Geometry-Structure. The second author is supported by DFG grant VA 1397/2-2. This work is part of the PhD project of the first author.}

\end{abstract}

\maketitle

\section{Introduction}
Following Gilbert Baumslag  a group is called \emph{just infinite} if it is infinite and all of its proper quotients are finite. The infinite cyclic group $\mathbb{Z}$ and the infinite dihedral group $D_\infty=\mathbb{Z}/2\mathbb{Z}*\mathbb{Z}/2\mathbb{Z}$ are examples of just infinite groups. Other examples are the groups ${\rm PSL}_n(\mathbb{Z})$ for $n\geq 3$ \cite{Margulis}, the Grigorchuk group \cite{Grigorchuk} and the Nottingham group \cite{Hegedus}, \cite{Klopsch}. This question was also studied in \cite{McCarthy} and solved for group extensions of abelian groups.

Here we are interested in a combinatorial characterization of just infinite Coxeter groups. Coxeter groups are defined via edge-labeled graphs.

Given a finite graph $\Gamma$ with the vertex set $V(\Gamma)$, the edge set $E(\Gamma)$ and an edge-labeling $m\colon E(\Gamma)\to\mathbb{N}_{\geq 3}\cup\left\{\infty\right\}$, the associated \emph{Coxeter group} $W_\Gamma$ is given be the presentation
\begin{gather*}
W_\Gamma=\langle V(\Gamma)\mid v^2\text{ for all }v\in V(\Gamma), (vw)^2 \text{ if }\left\{v,w\right\}\notin E(\Gamma), (vw)^{m(\left\{v,w\right\})}\text{ if } \left\{v,w\right\}\in E(\Gamma)\\ \text{ and } m(\left\{v,w\right\}<\infty)\rangle\,
\end{gather*}
Since the edge-label $3$ appears very often in the study of Coxeter groups it is convenient to omit this label in the graph $\Gamma$. Two special cases to keep in mind are that $W_\Gamma$ is isomorphic to a free product of copies of $\mathbb{Z}/2\mathbb{Z}$ if any two vertices of $\Gamma$ are linked by an edge and the edge label of every edge is $\infty$, and $W_\Gamma$ is isomorphic to a direct product of copies of $\mathbb{Z}/2\mathbb{Z}$ if $\Gamma$ is discrete.

It is natural to try to relate group theoretical properties of $W_\Gamma$ to combinatorial properties of $\Gamma$. Examples of this are Serre's property FA \cite[§6]{Serre}, being finite \cite{Coxeter}, being virtually abelian \cite[§4]{Humphreys} or being Gromov-hyperbolic \cite{Moussong}. More importantly, being virtually indicable, that is containing a finite index subgroup which in turn has $\Z$ as a quotient is closely related to the structure of $\Gamma$, see \cite{Cooper}. This property will be used to characterize the just infinite Coxeter groups.

We show that
whether $W_\Gamma$ is just infinite or not can also be easily read off $\Gamma$.

\newpage
\begin{theorem}
\label{JustInfiniteCoxeter}
A Coxeter group $W_\Gamma$ is just infinite if and only if $\Gamma$ is isomorphic to one of the graphs in Figure 1. 

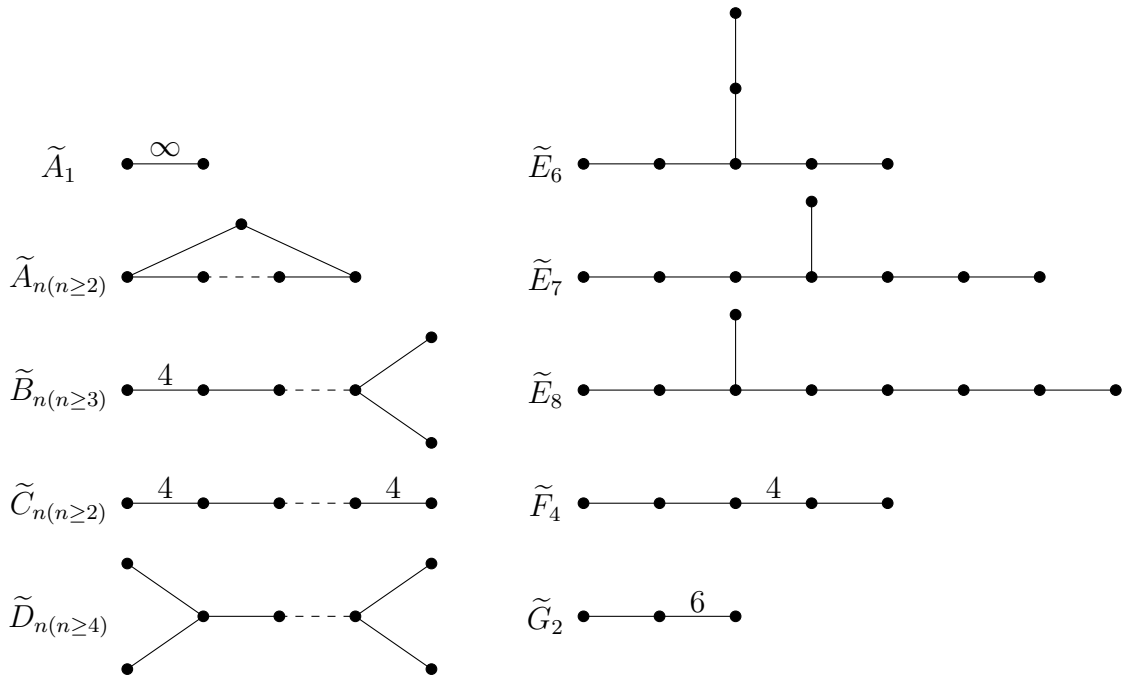
\begin{figure}[h]
	\begin{center}
	\captionsetup{justification=centering}
		\begin{tikzpicture}[scale=1, transform shape]
			\draw[fill=black]  (0,0) circle (2pt);
			\draw[fill=black]  (1,0) circle (2pt);
			\draw (0,0)--(1,0);
			\node at (0.5,0.2){$\infty$};
			\node at (-0.9, 0) {$\widetilde{A}_1$};	

            \draw[fill=black] (0,-1.5) circle (2pt);
            \draw[fill=black] (1,-1.5) circle (2pt);
            \draw (0,-1.5)--(1,-1.5);
            \draw[fill=black] (2,-1.5) circle (2pt);
            \draw[fill=black] (3,-1.5) circle (2pt);
            \draw[dashed] (1,-1.5)--(2, -1.5);
            \draw (2,-1.5)--(3,-1.5);
            \draw[fill=black] (1.5, -0.8) circle (2pt);
            \draw (0,-1.5)--(1.5, -0.8);
            \draw (3,-1.5)--(1.5, -0.8);
            \node at (-0.9, -1.5) {$\widetilde{A}_{n (n\geq 2)}$};	

            \draw[fill=black] (0,-3) circle (2pt);
            \draw[fill=black] (1,-3) circle (2pt);
            \node at (0.5,-2.8) {$4$};
            \draw (0,-3)--(1,-3);
            \draw (1,-3)--(2,-3);
            \draw[dashed] (2,-3)--(3,-3);
            \draw[fill=black] (3,-3) circle (2pt);
            \draw[fill=black] (2,-3) circle (2pt);
            \draw[fill=black] (4, -2.3) circle (2pt);
            \draw[fill=black] (4, -3.7) circle (2pt);
            \draw (3, -3)--(4,-2.3);
            \draw (3,-3)--(4, -3.7);
            \node at (-0.9, -3) {$\widetilde{B}_{n (n\geq 3)}$};	

            \draw[fill=black] (0,-4.5) circle (2pt);
            \draw[fill=black] (1,-4.5) circle (2pt);
            \draw (0,-4.5)--(1, -4.5);
            \node at (0.5, -4.3) {$4$};
            \draw[fill=black] (2, -4.5) circle (2pt);
            \draw (1, -4.5)--(2,-4.5);
            \draw[dashed] (2,-4.5)--(3,-4.5);
            \draw[fill=black] (3,-4.5) circle (2pt);
            \draw[fill=black] (4,-4.5) circle (2pt);
            \draw (3,-4.5)--(4,-4.5);
            \node at (3.5, -4.3) {$4$};
            \node at (-0.9, -4.5) {$\widetilde{C}_{n (n\geq 2)}$};

            \draw[fill=black] (0,-5.3) circle (2pt);
            \draw[fill=black] (0,-6.7) circle (2pt);
            \draw[fill=black] (1,-6) circle (2pt);
            \draw (0,-5.3)--(1,-6);
            \draw (0,-6.7)--(1,-6);
            \draw[fill=black] (2,-6) circle (2pt);
            \draw (1,-6)--(2,-6);
            \draw[dashed] (2,-6)--(3,-6);
            \draw[fill=black] (3,-6) circle (2pt);
            \draw[fill=black] (4, -5.3) circle (2pt);
            \draw[fill=black] (4, -6.7) circle (2pt);
            \draw (3,-6)--(4, -5.3);
            \draw (3,-6)--(4, -6.7);           
            \node at (-0.9, -6) {$\widetilde{D}_{n (n\geq 4)}$};

            \draw[fill=black] (6,0) circle (2pt);
            \draw[fill=black] (7,0) circle (2pt);
            \draw[fill=black] (8,0) circle (2pt);
            \draw[fill=black] (9,0) circle (2pt);
            \draw[fill=black] (10,0) circle (2pt);
            \draw[fill=black] (8,1) circle (2pt);
            \draw[fill=black] (8,2) circle (2pt);
            \draw (6,0)--(10,0);
            \draw (8,0)--(8,2);
            \node at (5.5,0) {$\widetilde{E}_6$};

            \draw[fill=black] (6, -1.5) circle (2pt);
            \draw[fill=black] (7, -1.5) circle (2pt);
            \draw[fill=black] (8, -1.5) circle (2pt);
            \draw[fill=black] (9, -1.5) circle (2pt);
            \draw[fill=black] (10, -1.5) circle (2pt);
            \draw[fill=black] (11, -1.5) circle (2pt);
            \draw[fill=black] (12, -1.5) circle (2pt);
            \draw[fill=black] (9, -0.5) circle (2pt);
            \draw (6,-1.5)--(12, -1.5);
            \draw (9, -1.5)--(9, -0.5);
            \node at (5.5,-1.5) {$\widetilde{E}_7$};

            \draw[fill=black] (6, -3) circle (2pt);
            \draw[fill=black] (7, -3) circle (2pt);
            \draw[fill=black] (8, -3) circle (2pt);
            \draw[fill=black] (9, -3) circle (2pt);
            \draw[fill=black] (10, -3) circle (2pt);
            \draw[fill=black] (11, -3) circle (2pt);
            \draw[fill=black] (12, -3) circle (2pt);
            \draw[fill=black] (13, -3) circle (2pt);
            \draw[fill=black] (8, -2) circle (2pt);
            \draw (6,-3)--(13, -3);
            \draw (8, -3)--(8, -2);
            \node at (5.5,-3) {$\widetilde{E}_8$};

            \draw[fill=black] (6, -4.5) circle (2pt);
            \draw[fill=black] (7, -4.5) circle (2pt);
            \draw[fill=black] (8, -4.5) circle (2pt);
            \draw[fill=black] (9, -4.5) circle (2pt);
            \draw[fill=black] (10, -4.5) circle (2pt);
            \draw (6, -4.5)--(10, -4.5);
            \node at (8.5, -4.3) {$4$};
            \node at (5.5,-4.5) {$\widetilde{F}_4$};

            \draw[fill=black] (6, -6) circle (2pt);
            \draw[fill=black] (7, -6) circle (2pt);
            \draw[fill=black] (8, -6) circle (2pt);
            \draw (6,-6)--(8, -6);
            \node at (7.5, -5.8) {$6$};
            \node at (5.5,-6) {$\widetilde{G}_2$};   
		\end{tikzpicture}
	\caption{Coxeter graphs of type $\widetilde{X}_n$ where $|V(\widetilde{X}_n)|=n+1$.}
	\end{center}
\end{figure}
\end{theorem}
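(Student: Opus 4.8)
The plan is to prove the two implications separately: the affine graphs are handled through the explicit semidirect-product structure of the associated Weyl group, while all other graphs are excluded by exhibiting an infinite proper quotient. Throughout I will use that $W_\Gamma$ is infinite precisely when $\Gamma$ is not of spherical type, and that for a disjoint union $\Gamma=\Gamma_1\sqcup\Gamma_2$ one has $W_\Gamma\cong W_{\Gamma_1}\times W_{\Gamma_2}$.

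For the ``if'' direction, suppose $\Gamma$ is one of the graphs in Figure 1, so that $W_\Gamma$ is an irreducible affine Coxeter group. I would invoke the classical description $W_\Gamma\cong\mathbb{Z}^n\rtimes W_0$, where $\mathbb{Z}^n$ is the translation lattice and $W_0$ is the associated finite Weyl group acting on $\mathbb{Z}^n$ through its reflection representation; since $\Gamma$ is connected, this action is irreducible over $\mathbb{Q}$, because $W_0$ acts via its (absolutely irreducible) reflection representation on a full-rank lattice. Given a nontrivial normal subgroup $N\trianglelefteq W_\Gamma$, I would study $N\cap\mathbb{Z}^n$, which is a $W_0$-invariant subgroup of $\mathbb{Z}^n$. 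If $N\cap\mathbb{Z}^n\neq 1$, its $\mathbb{Q}$-span is a nonzero $W_0$-subrepresentation, hence equals $\mathbb{Q}^n$, so $N\cap\mathbb{Z}^n$ has finite index in $\mathbb{Z}^n$ and therefore $N$ has finite index in $W_\Gamma$. If instead $N\cap\mathbb{Z}^n=1$, then $N$ embeds into the finite group $W_0$; writing a putative nontrivial element as $(v,w)$ and conjugating by the lattice shows that $\{(v+(1-w)t,\,w):t\in\mathbb{Z}^n\}\subseteq N$ is infinite unless $w=1$ (using faithfulness of the action), which would force the element into $N\cap\mathbb{Z}^n=1$; hence $N=1$. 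Thus every nontrivial normal subgroup has finite index, and $W_\Gamma$ is just infinite.

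For the ``only if'' direction, assume $W_\Gamma$ is just infinite. First I would reduce to $\Gamma$ connected: if $\Gamma=\Gamma_1\sqcup\Gamma_2$ with both parts nonempty, then $W_\Gamma=A\times B$ with $A$ infinite and $B$ nontrivial, and either $A\times 1$ (when $B$ is infinite) or $1\times B$ (when $B$ is finite) is a nontrivial normal subgroup of infinite index, contradicting just infiniteness; so $\Gamma$ is connected and $W_\Gamma$ is infinite. Now suppose, for contradiction, that $\Gamma$ is connected and $W_\Gamma$ infinite but $\Gamma$ is not affine. By the classification of virtually abelian Coxeter groups (Humphreys), a connected infinite $W_\Gamma$ is virtually abelian only in the affine case, so here $W_\Gamma$ is not virtually abelian, and in particular every finite-index subgroup of $W_\Gamma$ is non-abelian. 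On the other hand, by Cooper's characterization of virtual indicability, $W_\Gamma$ is virtually indicable, hence has a finite-index subgroup $H_0$ surjecting onto $\mathbb{Z}$.

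To conclude I would replace $H_0$ by its normal core $H\trianglelefteq W_\Gamma$, which is still of finite index; restricting the surjection $H_0\twoheadrightarrow\mathbb{Z}$ to $H$ shows $H^{\mathrm{ab}}$ is infinite, and $H$ is non-abelian since $W_\Gamma$ is not virtually abelian. Then $[H,H]$ is characteristic in $H$, hence normal in $W_\Gamma$; it is nontrivial because $H$ is non-abelian, and of infinite index because $[W_\Gamma:[H,H]]\geq[H:[H,H]]=|H^{\mathrm{ab}}|=\infty$. Therefore $W_\Gamma/[H,H]$ is an infinite proper quotient, contradicting just infiniteness, so $\Gamma$ must be affine. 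I expect the main obstacle to lie exactly in this non-affine step: it hinges on the two external inputs pulling in opposite directions, namely Cooper's result guaranteeing a finite-index subgroup with infinite abelianization and Humphreys' classification guaranteeing that this subgroup is non-abelian, and one must make sure these together cover \emph{every} connected infinite non-affine diagram; the remaining verifications (the product reduction and the $\mathbb{Q}$-irreducibility of the affine action) are comparatively routine.
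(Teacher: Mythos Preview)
Your proof is correct and follows the same overall architecture as the paper: affine diagrams are handled via the semidirect-product structure $\mathbb{Z}^n\rtimes W_0$, and all other infinite connected diagrams are excluded by combining Cooper--Long--Reid's virtual indicability with the fact (from the classification) that such groups are not virtually abelian. The disconnected reduction is identical.

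Where you diverge is in the packaging of both steps, and in each case your version is a bit more self-contained. For the ``if'' direction the paper simply cites Maxwell (and Paris--Varghese) for the statement that irreducible affine Coxeter groups have no nontrivial normal subgroups of infinite index; you instead reprove this directly from $\mathbb{Q}$-irreducibility of the reflection representation and a short conjugation computation in $\mathbb{Z}^n\rtimes W_0$. For the ``only if'' direction the paper builds a small piece of general machinery (a subdirect-product lemma and a core argument, Proposition~\ref{CoreNonTrivial} and Corollary~\ref{IndicableAbelian}) to produce a normal subgroup of infinite index inside $\ker(N\twoheadrightarrow\mathbb{Z})$; you bypass this by observing that once $H\trianglelefteq W_\Gamma$ is finite-index, non-abelian, and has infinite abelianization, the commutator subgroup $[H,H]$ is already characteristic in $H$, hence normal in $W_\Gamma$, nontrivial, and of infinite index. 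Your route is shorter and avoids the core formalism; the paper's route is more flexible (it would apply with any property $\mathcal{P}$ preserved under subdirect products, not just abelianness), but for this theorem that extra generality is not needed.
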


The associated Coxeter groups to the graphs in Figure 1 are precisely the \emph{irreducible affine Coxeter groups}. We show that these groups can be distinguished among all Coxeter groups by their finite quotients. 
For a group $G$  we denote by $\mathcal{F}(G)$ the set of isomorphism classes of finite quotients of $G$. Two groups $G$ and $H$ are said to have the same finite quotients if $\mathcal{F}(G) =\mathcal{F}(H)$. A group $G$ is called \emph{profinitely rigid among a class of groups $\mathcal{C}$} if $G\in \mathcal{C}$ and for any group $H$ in the class $\mathcal{C}$ whenever $\mathcal{F}(G)=\mathcal{F}(H)$, then $G\cong H$. The profinite rigidity among the class consisting of finitely generated residually finite groups of some Coxeter groups where $|V(\Gamma)|=3$  was shown in \cite[Corollary 4.5]{Bridson}. It was proven in \cite[Theorem 1.2]{SantosRegoSchwer} that a Coxeter group $W_\Gamma$ where $|V(\Gamma)|\leq 3$ is profinitely rigid among  the class consisting of Coxeter groups where the defining graphs have at most $3$ vertices. We extend these results to just infinite Coxeter groups.

\begin{theorem}
Irreducible affine Coxeter groups are profinitely rigid among the class consisting of all Coxeter groups.
\end{theorem}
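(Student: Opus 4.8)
The plan is to work entirely with the profinite completion. Since finitely generated Coxeter groups are residually finite, for two Coxeter groups $W$ and $W'$ one has $\mathcal{F}(W)=\mathcal{F}(W')$ if and only if $\widehat{W}\cong\widehat{W'}$, so it suffices to show that an irreducible affine $W=W_\Gamma$ is determined up to isomorphism by $\widehat{W}$ within the class of Coxeter groups. Writing $W\cong \mathbb{Z}^n\rtimes W_0$, where $W_0$ is the associated finite irreducible Weyl group acting on the translation lattice $\mathbb{Z}^n$ through its reflection representation, we have $\widehat{W}\cong \widehat{\mathbb{Z}}^n\rtimes W_0$. The strategy is to extract from $\widehat{W}$ a short list of invariants---the degree of subgroup growth, the maximal finite normal subgroup, the Fitting subgroup together with the conjugation action on it, and the abelianization---that are strong enough to pin down the diagram $\Gamma$.

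First I would recover $n$ and show that $W'$ is virtually abelian. The subgroup growth function is a profinite invariant, and $W$, being virtually $\mathbb{Z}^n$, has polynomial subgroup growth of degree $n$; hence so does $W'$. By the theorem of Lubotzky--Mann--Segal a finitely generated residually finite group of polynomial subgroup growth is virtually solvable of finite rank, and a virtually solvable Coxeter group is virtually abelian (equivalently, each connected component of its defining graph is of spherical or affine type, since an irreducible non-affine Coxeter group contains a nonabelian free subgroup). Thus every component of $\Gamma'$ is spherical or affine, and the degree of growth identifies the Hirsch length, giving total affine rank $n$. Decomposing $W'$ along its components we obtain $W'\cong F\times A_1\times\cdots\times A_k$ with $F$ finite (the spherical part) and $A_1,\dots,A_k$ irreducible affine of ranks $n_1,\dots,n_k$ summing to $n$.

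Next I would eliminate $F$ and force $k=1$. For an irreducible affine Coxeter group the finite Weyl group acts on $\mathbb{Z}^n\otimes\mathbb{Q}$ faithfully and with no nonzero fixed vector, so a direct computation shows that $\widehat{\mathbb{Z}}^n\rtimes W_0$ has trivial maximal finite normal subgroup; on the other hand the maximal finite normal subgroup of $\widehat{W'}$ equals $F$. Since this subgroup is characteristic, $\widehat{W}\cong\widehat{W'}$ yields $F=1$. To rule out $k\geq 2$ I would use the Fitting subgroup: faithfulness and fixed-point-freeness of the action force $\mathrm{Fit}(\widehat{W})=\widehat{\mathbb{Z}}^n$ and likewise $\mathrm{Fit}(\widehat{W'})=\prod_j\widehat{\mathbb{Z}}^{n_j}=\widehat{\mathbb{Z}}^n$, so the conjugation action of the finite quotient on the Fitting subgroup is a profinite invariant. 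For $W$ this action is (the completion of) the reflection representation of $W_0$, which is absolutely irreducible; for $W'$ it is the direct sum of the $k$ reflection representations of the factors, hence decomposes over every $\mathbb{Q}_p$ when $k\geq 2$. Comparing decomposability over $\mathbb{Q}_p$ then forces $k=1$, so $W'$ is irreducible affine.

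Finally, knowing that $W'$ is irreducible affine, I would separate the families by three isomorphism invariants: the rank $n$ (already matched), the abelianization $W^{\mathrm{ab}}$, and the finite Weyl group, recovered as $\widehat{W}/\mathrm{Fit}(\widehat{W})\cong W_0$. A direct computation of abelianizations gives $W^{\mathrm{ab}}\cong\mathbb{Z}/2$ for types $\widetilde{A}_n\,(n\geq 2),\widetilde{D}_n,\widetilde{E}_6,\widetilde{E}_7,\widetilde{E}_8$, then $W^{\mathrm{ab}}\cong(\mathbb{Z}/2)^2$ for $\widetilde{A}_1,\widetilde{B}_n,\widetilde{F}_4,\widetilde{G}_2$, and $W^{\mathrm{ab}}\cong(\mathbb{Z}/2)^3$ for $\widetilde{C}_n$; in particular $\widetilde{B}_n$ and $\widetilde{C}_n$, which share the Weyl group of order $2^n n!$, are told apart by $W^{\mathrm{ab}}$. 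Within each abelianization class the pair $(n,|W_0|)$ then separates the remaining types---for instance $\widetilde{A}_n$ and $\widetilde{D}_n$ have Weyl groups of orders $(n+1)!$ and $2^{n-1}n!$, which coincide only at the excluded value $n=3$---so a short case check shows that $(n,W^{\mathrm{ab}},|W_0|)$ determines $\Gamma$ up to isomorphism, whence $\Gamma'\cong\Gamma$ and $W'\cong W$. I expect the main obstacle to lie in the second and third steps, that is in proving from $\widehat{W'}$ alone that $W'$ is virtually abelian and then irreducible affine: this rests on identifying the relevant characteristic subgroups of the profinite completion and on the absolute irreducibility of the reflection representation, whereas the final separation of types is routine arithmetic.
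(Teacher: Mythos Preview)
Your argument is correct, but it diverges from the paper's in two places. To see that $W'$ is virtually abelian you invoke polynomial subgroup growth and the Lubotzky--Mann--Segal theorem; the paper instead just observes that $W'$ embeds in $\widehat{W'}\cong\widehat{\mathbb{Z}}^{n}\rtimes W_0$, which is visibly virtually abelian, so your heavy machinery there is unnecessary. The more substantive difference is in ruling out a nontrivial product $A_1\times\cdots\times A_k$ of irreducible affine factors: you identify $\widehat{\mathbb{Z}}^{n}$ as the Fitting subgroup of $\widehat{W}$ and compare the induced module over $\mathbb{Q}_p$, using absolute irreducibility of the reflection representation to force $k=1$; the paper instead bounds $k\le 3$ via the abelianization, then applies Lemma~\ref{factoriso} (in its profinite form) to see that $W_0$ would have to split as a direct product of nontrivial finite Coxeter groups, invokes Nuida's classification (only $W_{B_{2k+1}}$ and $W_{I_2(4k+2)}$ admit such splittings), and finishes with a rank comparison. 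Your representation-theoretic route is cleaner and avoids that case analysis, at the cost of checking carefully that the abstract Fitting subgroup of $\widehat{\mathbb{Z}}^{n}\rtimes W_0$ really is $\widehat{\mathbb{Z}}^{n}$ and that the isomorphism respects the $p$-primary decomposition (both true, the latter via Nikolov--Segal); the paper's route stays within elementary Coxeter combinatorics. The elimination of the spherical factor via finite normal subgroups and the final separation of types by $(n,|W_0|,|W^{\mathrm{ab}}|)$ are essentially the same in both proofs.
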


By definition, a finitely generated residually finite group $G$ is called \emph{profinitely rigid (in the absolute sense)} if $G$ is profinitely rigid among all finitely generated residually finite groups. We end the introduction with the following question.
\begin{question}
Are Coxeter groups profinitely rigid in the absolute sense?
\end{question}

\section{Coxeter groups} 
Coxeter groups have been studied from many different perspectives, e.g. in \cite{Coxeter}, \cite{Tits}, \cite{Davis}. More information about Coxeter groups can be found in \cite{Humphreys}. 

A Coxeter group $W_\Gamma$ is called \emph{irreducible} if $\Gamma$ is connected. Note that if $\Gamma$ has the connected components $\Gamma_1,\ldots, \Gamma_n$, then $W_\Gamma\cong W_{\Gamma_1}\times\ldots\times W_{\Gamma_n}$. Hence, the irreducible Coxeter groups can be seen as the building blocks of a general Coxeter group. For us, it will be important that there exist three different types of Coxeter groups: spherical, affine and general Coxeter groups. The spherical ones are precisely the finite ones and the affine ones are precisely the Coxeter groups, which are virtually free abelian of positive rank.
These Coxeter groups have been classified using their  
Coxeter graphs.

\begin{theorem}(\cite{Coxeter}, \cite[Proposition 17.2.1]{Davis}, \cite[§4]{Humphreys})
\label{FiniteVirtuallyAbelian}
Let $W_\Gamma$ be a Coxeter group. 
\begin{enumerate}
\item The group $W_\Gamma$ is finite if and only if every connected component of $\Gamma$ is isomorphic to one of the graphs in Figure 2.
\begin{figure}[h]
	\begin{center}
	\captionsetup{justification=centering}
		\begin{tikzpicture}
			\draw[fill=black]  (0,0) circle (2pt);
			\draw[fill=black]  (1,0) circle (2pt);
            \draw[fill=black]  (2,0) circle (2pt);
			\draw (0,0)--(1,0);
            \draw[dashed] (1,0)--(2,0);
            \draw[fill=black] (3,0) circle (2pt);
            \draw (2,0)--(3,0);
			\node at (-0.9,0) {$A_{n(n\geq 1)}$};

            \draw[fill=black]  (0,-1.5) circle (2pt);
			\draw[fill=black]  (1,-1.5) circle (2pt);
            \draw[fill=black]  (2,-1.5) circle (2pt);
			\draw (0,-1.5)--(1,-1.5);
            \draw[dashed] (1,-1.5)--(2,-1.5);
            \draw[fill=black] (3,-1.5) circle (2pt);
            \draw (2,-1.5)--(3,-1.5);
            \node at (2.5, -1.3) {$4$};
            \node at (-0.9,-1.5) {$B_{n(n\geq 2)}$};

            \draw[fill=black]  (0,-3) circle (2pt);
			\draw[fill=black]  (1,-3) circle (2pt);
            \draw[fill=black]  (2,-3) circle (2pt);
			\draw (0,-3)--(1,-3);
            \draw[dashed] (1,-3)--(2,-3);
            \draw[fill=black] (3,-2.3) circle (2pt);
            \draw[fill=black] (3,-3.7) circle (2pt);
            \draw (2,-3)--(3, -2.3);
            \draw (2,-3)--(3, -3.7);
            \node at (-0.9,-3) {$D_{n(n\geq 4)}$};

            \draw (0,-4.5)--(4,-4.5);
            \draw (2,-3.5)--(2,-4.5);
            \draw[fill=black] (0,-4.5) circle (2pt); 
            \draw[fill=black] (1,-4.5) circle (2pt);
            \draw[fill=black] (2,-4.5) circle (2pt);
            \draw[fill=black] (3,-4.5) circle (2pt);
            \draw[fill=black] (4,-4.5) circle (2pt);
            \draw[fill=black] (2,-3.5) circle (2pt);
            \node at (-0.5,-4.5) {$E_6$};

            \draw (0,-6)--(5,-6);
            \draw (2,-5)--(2,-6);
            \draw[fill=black] (0,-6) circle (2pt); 
            \draw[fill=black] (1,-6) circle (2pt);
            \draw[fill=black] (2,-6) circle (2pt);
            \draw[fill=black] (3,-6) circle (2pt);
            \draw[fill=black] (4,-6) circle (2pt);
            \draw[fill=black] (5,-6) circle (2pt);
            \draw[fill=black] (2,-5) circle (2pt);
            \node at (-0.5,-6) {$E_7$};

            \draw (0,-7.5)--(6,-7.5);
            \draw (2,-6.5)--(2,-7.5);
            \draw[fill=black] (0,-7.5) circle (2pt); 
            \draw[fill=black] (1,-7.5) circle (2pt);
            \draw[fill=black] (2,-7.5) circle (2pt);
            \draw[fill=black] (3,-7.5) circle (2pt);
            \draw[fill=black] (4,-7.5) circle (2pt);
            \draw[fill=black] (5,-7.5) circle (2pt);
            \draw[fill=black] (6,-7.5) circle (2pt);
            \draw[fill=black] (2,-6.5) circle (2pt);
            \node at (-0.5,-7.5) {$E_8$};

            \draw[fill=black] (8,0) circle (2pt);
            \draw[fill=black] (9,0) circle (2pt);
            \draw[fill=black] (10,0) circle (2pt);
            \draw[fill=black] (11,0) circle (2pt);
            \draw (8,0)--(11,0);
            \node at (9.5, 0.2) {$4$};
            \node at (7.5,0) {$F_4$};

            \draw[fill=black] (8,-1.5) circle (2pt);
            \draw[fill=black] (9,-1.5) circle (2pt);
            \draw (8,-1.5)--(9,-1.5);
            \node at (8.5,-1.3)  {$6$};
            \node at (7.5,-1.5) {$G_2$};

            \draw[fill=black] (8,-3) circle (2pt);
            \draw[fill=black] (9,-3) circle (2pt);
            \draw[fill=black] (10,-3) circle (2pt);
            \draw (8,-3)--(10,-3);
            \node at (9.5, -2.8){$5$};
            \node at (7.5,-3) {$H_3$};

            \draw[fill=black] (8,-4.5) circle (2pt);
            \draw[fill=black] (9,-4.5) circle (2pt);
            \draw[fill=black] (10,-4.5) circle (2pt);
            \draw[fill=black] (11,-4.5) circle (2pt);
            \draw (8,-4.5)--(11, -4.5);
            \node at (10.5, -4.3){$5$};
            \node at (7.5,-4.5) {$H_4$};

            \draw[fill=black] (8,-6) circle (2pt);
            \draw[fill=black] (9,-6) circle (2pt);
            \draw (8,-6)--(9,-6);
            \node at (8.5, -5.8){$m$};
            \node at (7.3,-6) {$I_2(m)$};
            \node at (10.8, -6){, $m=5$ or $m\geq7$};
				
		\end{tikzpicture}
	\caption{Coxeter graphs of type $X_n$ where $|V(X_n)|=n$.}
	\end{center}
\end{figure}
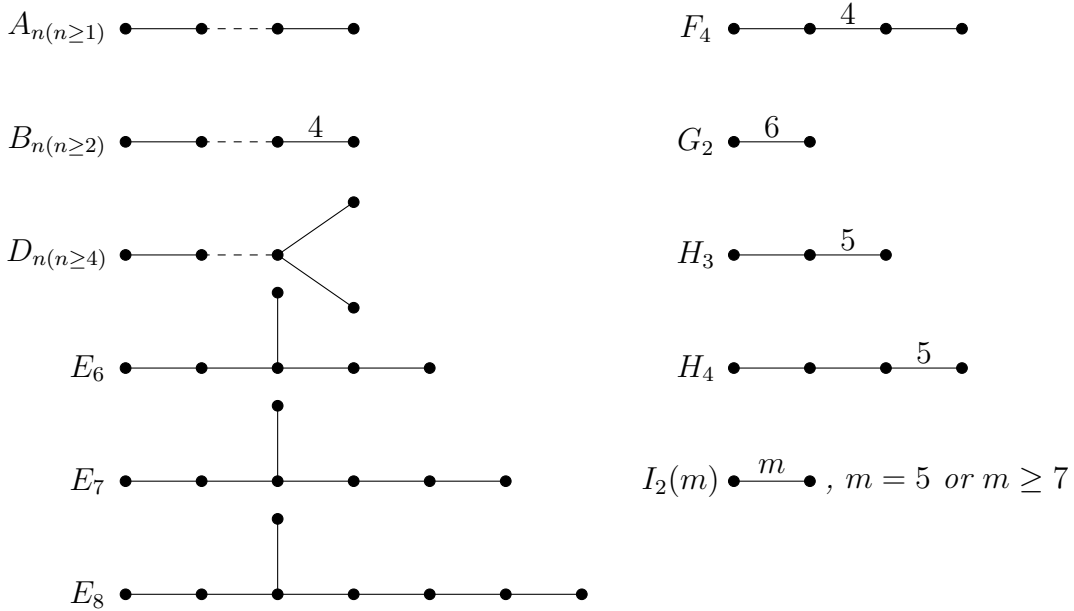

\item The group $W_\Gamma$ is virtually free abelian of positive rank if and only if every connected component of $\Gamma$ is isomorphic to one of the graphs in Figure 1 or in Figure 2 and there exists at least one connected component which is isomorphic to a graph in Figure 1.
\end{enumerate}
\end{theorem}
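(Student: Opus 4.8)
The statement is the classical classification of finite and affine Coxeter groups, and I would follow the standard route through the canonical (Tits) bilinear form. Since $W_\Gamma \cong W_{\Gamma_1} \times \cdots \times W_{\Gamma_n}$ over the connected components $\Gamma_i$ of $\Gamma$, and a direct product of groups is finite exactly when every factor is finite, and is virtually free abelian of positive rank exactly when every factor is either finite or virtually free abelian of positive rank with at least one factor of the latter kind, both assertions reduce to the case of connected $\Gamma$. So throughout I would assume $\Gamma$ connected and classify, in terms of $\Gamma$, when $W_\Gamma$ is finite and when it is infinite but virtually free abelian of positive rank; the general statements then follow by assembling the components.

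The central object is the symmetric bilinear form $B = B_\Gamma$ on $V = \bigoplus_{s \in S} \mathbb{R}\alpha_s$, where $S = V(\Gamma)$, determined by $B(\alpha_s, \alpha_s) = 1$ and, for $s \neq t$, $B(\alpha_s, \alpha_t) = -\cos(\pi/m_{st})$, with $m_{st}$ the Coxeter label of the pair (so $m_{st} = 2$ for non-adjacent $s,t$, and the value is $-1$ when $m_{st} = \infty$); its Gram matrix is read directly off $\Gamma$. By Tits' theorem the assignment $s \mapsto (v \mapsto v - 2B(\alpha_s, v)\alpha_s)$ extends to a faithful representation $W_\Gamma \hookrightarrow O(B)$, see \cite{Tits}, \cite{Humphreys}. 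The proof then rests on the dictionary between the signature of $B$ and the type of $W_\Gamma$: first, $W_\Gamma$ is finite iff $B$ is positive definite, since a finite group preserves an inner product which forces $B$ to be definite, while conversely positive definiteness makes $O(B)$ compact and the reflecting hyperplanes locally finite, so that $W_\Gamma$ is discrete in $O(B)$ and hence finite; second, for infinite connected $\Gamma$, the group $W_\Gamma$ is virtually free abelian of positive rank iff $B$ is positive semidefinite but degenerate, in which case $\mathrm{rad}(B)$ is one-dimensional and $W_\Gamma$ descends to a proper cocompact action by affine isometries on the Euclidean space $V/\mathrm{rad}(B)$, so by Bieberbach's structure theory it contains a finite-index subgroup isomorphic to $\mathbb{Z}^{|S|-1}$.

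It remains to classify the connected diagrams by the sign of $B$, and this combinatorial enumeration is the real work. I would first observe that the restriction of $B$ to a full subdiagram is a principal submatrix, so positive definiteness passes to subdiagrams, while removing a vertex turns a connected positive semidefinite degenerate form into a positive definite one. Next I would compute the Gram determinants of the diagrams in Figures 1 and 2 directly to certify that every diagram in Figure 2 is positive definite and every diagram in Figure 1 is positive semidefinite of corank one, the latter being precisely the minimal connected diagrams that fail to be positive definite. Finally I would prove exhaustiveness: a connected diagram is positive definite iff it contains no Figure 1 diagram as a subdiagram, and is positive semidefinite and degenerate iff it is itself one of the Figure 1 diagrams. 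Combined with the signature dictionary this yields both statements.

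I expect this last enumeration, in particular the implication that any connected diagram outside the two lists contains an affine subdiagram and is therefore indefinite, to be the main obstacle, as it is the one genuinely case-heavy part. Establishing it requires bounding vertex degrees, the number and length of the branches at a branch vertex, and the admissible edge labels, each bound coming from a determinant computation on a candidate subdiagram: a vertex of degree four, two branch points, an over-long branch, or an edge label that is too large each produces one of the $\widetilde{A}$, $\widetilde{D}$, $\widetilde{E}$, $\widetilde{F}$ or $\widetilde{G}$ subdiagrams of Figure 1. Once these numerical constraints are in hand the finite list of Figure 2 is forced and the degenerate case is pinned down by minimality, the rest being the structural dictionary described above.
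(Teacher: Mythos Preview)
The paper does not prove this theorem at all: it is quoted with references to \cite{Coxeter}, \cite[Proposition 17.2.1]{Davis} and \cite[\S4]{Humphreys} and then used as a black box throughout. Your sketch is precisely the classical argument carried out in those sources (especially \cite[\S2, \S4, \S6]{Humphreys}): reduce to connected $\Gamma$, invoke the Tits geometric representation and its bilinear form $B$, translate finiteness and the affine property into the signature of $B$, and then enumerate the positive definite and positive semidefinite corank-one diagrams by subdiagram and determinant arguments. As an outline it is correct and matches the cited literature.

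One small point you glide over: in your signature dictionary you assert that, for connected infinite $\Gamma$, virtual free-abelianness is \emph{equivalent} to $B$ being positive semidefinite degenerate, but you only argue the implication from the form to the group (via the affine action and Bieberbach). The reverse implication, that an indefinite $B$ rules out virtual abelianness, needs a word: the usual route is that in the indefinite case $W_\Gamma$ has exponential word growth (for instance via the Tits cone or by exhibiting a free subgroup), whereas virtually abelian groups have polynomial growth. This is standard and in the cited references, but since the rest of your sketch is careful it is worth naming explicitly.
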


Many group theoretical properties of a Coxeter group $W_\Gamma$ can be easily read off the combinatorial structure of $\Gamma$ or a modification of $\Gamma$. Let $\Gamma_{odd}\subseteq\Gamma $ be the subgraph obtained from $\Gamma$ by removing all edges with even or $\infty$ labels. The abelianization of $W_\Gamma$, denoted by $W^{ab}_\Gamma$ is visible in the structure of $\Gamma_{odd}$.
\begin{proposition}(\cite[Fact 3.16]{Muhlherr})
\label{Abel}
Let $W_\Gamma$ be a Coxeter group. The abelianization of $W_\Gamma$ is isomorphic to $(\mathbb{Z}/2\mathbb{Z})^k$ where $k$ is the number of connected components of $\Gamma_{odd}$. 
\end{proposition}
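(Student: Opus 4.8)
The plan is to compute $W_\Gamma^{ab}$ directly from the Coxeter presentation of $W_\Gamma$. Since passing to the abelianization amounts to imposing that all generators commute, I would first rewrite every defining relation additively as an identity in the free abelian group on the vertex set $V(\Gamma)$, and then read off the resulting finitely presented abelian group.

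Writing the group operation additively, the relation $v^2 = 1$ becomes $2v = 0$, so every generator has order dividing $2$. For a pair $\{v,w\} \notin E(\Gamma)$ the relation $(vw)^2 = 1$ becomes $2v + 2w = 0$, which is already implied by $2v = 0$ and $2w = 0$ and hence redundant. For an edge $\{v,w\}$ with finite label $m = m(\{v,w\})$, commutativity gives $(vw)^m = v^m w^m$, so the relation becomes $m(v+w) = 0$.

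The decisive point is the parity of $m$. If $m$ is even, then $mv = 0 = mw$ and the relation is again redundant; an edge labeled $\infty$ imposes no relation at all. Thus the only genuine relations come from the odd-labeled edges, and for these $mv = v$ and $mw = w$ (using $2v = 0 = 2w$), so the relation collapses to $v = w$. Consequently $W_\Gamma^{ab}$ is the abelian group
\[
W_\Gamma^{ab} \cong \bigl\langle V(\Gamma) \mid 2v = 0 \text{ for all } v,\ v = w \text{ for every odd-labeled edge } \{v,w\} \bigr\rangle.
\]

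It remains to identify this group with $(\mathbb{Z}/2\mathbb{Z})^k$. By construction the odd-labeled edges are exactly the edges of $\Gamma_{odd}$, so the relations $v = w$ identify two vertices precisely when they lie in the same connected component of $\Gamma_{odd}$. Choosing one representative vertex from each of the $k$ connected components yields $k$ commuting elements of order $2$ subject to no further relations, whence $W_\Gamma^{ab} \cong (\mathbb{Z}/2\mathbb{Z})^k$. I expect the only step needing genuine care to be the verification that edges with even or infinite labels contribute no relation beyond $2v = 0$; once this redundancy is confirmed, the count of generators by connected components of $\Gamma_{odd}$ is immediate.
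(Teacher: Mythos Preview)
Your argument is correct and is the standard direct computation of the abelianization from the Coxeter presentation: abelianizing forces each generator to have order $2$, edges with even or $\infty$ label contribute nothing new, and an odd-labeled edge forces its endpoints to coincide, so the generators collapse to one per connected component of $\Gamma_{odd}$.

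The paper itself does not prove this proposition; it merely records it with a reference to \cite[Fact 3.16]{Muhlherr}. Your write-up thus supplies what the paper omits. The only cosmetic point worth tightening is the phrase ``the relation collapses to $v=w$'': strictly speaking the relation $m(v+w)=0$ with $m$ odd reduces to $v+w=0$, and then $v=-w=w$ follows because $2w=0$. You allude to this via $mv=v$, $mw=w$, but stating the intermediate $v+w=0$ explicitly removes any possible ambiguity.
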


Let us take a closer look at the Coxeter groups $W_\Gamma$ where $\Gamma$ is isomorphic to a graph in Figure 1. These groups and their abelianizations admit a semidirect product structure as follows.
\begin{proposition} (\cite[§4.2]{Humphreys})
\label{IrrAffineAb}
 Let $W_\Gamma$ be a Coxeter group. If $\Gamma$ is isomorphic to one of the graphs in Figure 1, then $W_\Gamma\cong\mathbb{Z}^{n-1}\rtimes W_\Omega$ where $n=|V(\Gamma)|$ and $\Omega$ is one of the types $A_n, B_n, E_6, E_7, E_8$, $F_4$, $G_2$. In particular,
\begin{enumerate}
\item $W_{\widetilde{A}_1}\cong\mathbb{Z}\rtimes\mathbb{Z}/2\mathbb{Z}$ and $W_{\widetilde{A}_1}^{ab}\cong\mathbb{Z}/2\mathbb{Z}\times\mathbb{Z}/2\mathbb{Z}$
\item $W_{\widetilde{A}_n}\cong\mathbb{Z}^{n}\rtimes W_{A_n}$ and $W^{ab}_{\widetilde{A}_n}\cong \mathbb{Z}/2\mathbb{Z}$
\item $W_{\widetilde{B}_n}\cong\mathbb{Z}^{n}\rtimes W_{B_n}$ and $W^{ab}_{\widetilde{B}_n}\cong\mathbb{Z}/2\mathbb{Z}\times \Z/2\Z$
\item $W_{\widetilde{C}_n}\cong\mathbb{Z}^{n}\rtimes W_{B_n}$ and $W^{ab}_{\widetilde{C}_n}\cong (\Z/2\Z)^3$
\item $W_{\widetilde{D}_n}\cong\mathbb{Z}^{n}\rtimes W_{D_n}$ and $W^{ab}_{\widetilde{D}_n}\cong \Z/2\Z$
\item $W_{\widetilde{E}_6}\cong\mathbb{Z}^{6}\rtimes W_{E_6}$ and $W^{ab}_{\widetilde{E}_6}\cong \Z/2\Z$
\item $W_{\widetilde{E}_7}\cong\mathbb{Z}^{7}\rtimes W_{E_7}$ and $W^{ab}_{\widetilde{E}_7}\cong\mathbb{Z}/2\mathbb{Z}$
\item $W_{\widetilde{E}_8}\cong\mathbb{Z}^{8}\rtimes W_{E_8}$ and $W^{ab}_{\widetilde{E}_8}\cong\mathbb{Z}/2\mathbb{Z}$
\item $W_{\widetilde{F}_4}\cong\mathbb{Z}^{4}\rtimes W_{F_4}$ and $W^{ab}_{\widetilde{F}_4}\cong \mathbb{Z}/2\mathbb{Z}\times\mathbb{Z}/2\mathbb{Z}$
\item $W_{\widetilde{G}_2}\cong\mathbb{Z}^{2}\rtimes W_{G_2}$ and $W^{ab}_{\widetilde{G}_2}\cong \mathbb{Z}/2\mathbb{Z}\times\mathbb{Z}/2\mathbb{Z}$
\end{enumerate}
\end{proposition}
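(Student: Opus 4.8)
The plan is to treat the two assertions of the proposition separately: the semidirect–product decomposition, which is classical affine Weyl group theory, and the ten abelianization computations, which I would reduce entirely to Proposition~\ref{Abel}.

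For the structural statement, I would first recognize each graph in Figure~1 as the Coxeter diagram of the \emph{affine Weyl group} attached to a finite irreducible root system, the affine diagram $\widetilde{X}_\ell$ carrying exactly one node more than the finite diagram $X_\ell$; thus, for $n=|V(\Gamma)|$, the associated finite (spherical) type $\Omega$ has rank $n-1$ and ranges over $A_{n-1},B_{n-1},D_{n-1},E_6,E_7,E_8,F_4,G_2$ (with $\widetilde{C}_n$ and $\widetilde{B}_n$ both yielding the Coxeter group $W_{B_{n-1}}$, since the Coxeter diagrams of $B$ and $C$ coincide). By the standard normal form for affine Weyl groups \cite[\S4.2]{Humphreys}, such a group is the split extension of its lattice of translations $L\cong\mathbb{Z}^{n-1}$ by the finite Weyl group $W_\Omega$, where $W_\Omega$ acts on $L$ through its reflection representation; here $L$ is the normal free abelian subgroup and $W_\Omega$ is a complement, giving $W_\Gamma\cong\mathbb{Z}^{n-1}\rtimes W_\Omega$. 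The only bookkeeping to verify is the rank count, i.e. that removing the affine node from each graph in Figure~1 produces the finite diagram of rank $n-1$ in Figure~2, which is immediate by inspection.

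For the abelianizations I would apply Proposition~\ref{Abel}, which gives $W_\Gamma^{ab}\cong(\mathbb{Z}/2\mathbb{Z})^k$ with $k$ the number of connected components of $\Gamma_{odd}$, the graph obtained from $\Gamma$ by deleting every edge labeled by an even number or $\infty$. Since in Figure~1 the surviving (label $3$, i.e. unlabeled) edges are precisely those not marked $4$, $6$, or $\infty$, I would simply delete the marked edges and count components. For $\widetilde{A}_n\,(n\ge2)$, $\widetilde{D}_n$, $\widetilde{E}_6$, $\widetilde{E}_7$, $\widetilde{E}_8$ nothing is removed, so $\Gamma_{odd}=\Gamma$ is connected, $k=1$, and $W_\Gamma^{ab}\cong\mathbb{Z}/2\mathbb{Z}$. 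For $\widetilde{A}_1$ the single $\infty$-edge is removed, leaving two isolated vertices; for $\widetilde{B}_n$ the terminal $4$-edge detaches one endpoint; for $\widetilde{F}_4$ the central $4$-edge and for $\widetilde{G}_2$ the $6$-edge each cut the path into two pieces; in all four cases $k=2$ and $W_\Gamma^{ab}\cong(\mathbb{Z}/2\mathbb{Z})^2$. For $\widetilde{C}_n$ the two terminal $4$-edges detach both endpoints, leaving three components, so $k=3$ and $W_\Gamma^{ab}\cong(\mathbb{Z}/2\mathbb{Z})^3$.

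I do not expect a genuine obstacle here: once Proposition~\ref{Abel} is invoked, each abelianization is settled by inspecting Figure~1, and the semidirect structure is the textbook description of affine Weyl groups. The two points demanding care are (i) reading off which edges carry even or $\infty$ labels, so that no label-$3$ edge is accidentally deleted, and (ii) the rank indexing between the affine graph on $n$ vertices and its finite counterpart of rank $n-1$. As a final consistency check, one notes that every graph in Figure~1 except the cycle $\widetilde{A}_n\,(n\ge2)$ is a tree, so there each deleted (even or $\infty$) edge is a bridge and $k$ equals one plus the number of such edges; the cycle $\widetilde{A}_n$ has no such edge and remains connected. This reproduces every entry of the list.
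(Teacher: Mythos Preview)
Your proposal is correct. The paper does not supply its own proof of this proposition: the semidirect product decomposition is simply attributed to \cite[\S4.2]{Humphreys}, and the abelianizations are implicitly obtained from Proposition~\ref{Abel} exactly as you do, by deleting the even/$\infty$ edges and counting components of $\Gamma_{odd}$---so your argument matches the paper's intended justification.
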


The orders of the finite parts in these semidirect decompositions of the irreducible affine Coxeter groups have been computed in \cite[§2.10]{Humphreys}.

\begin{center}
\begin{tabular}[h]{|c|c|c|c|c|c|c|c|c|}
\hline
$\Gamma$ & $A_n$ & $B_n$ & $D_n$ & $E_6$ & $E_7$ & $E_8$ & $F_4$ & $G_2$\\
\hline
$|W_\Gamma|$ & $(n+1)!$ & $2^n\cdot n!$ & $2^{n-1}n!$  & $2^73^45$ & $2^{10}3^4\cdot 5\cdot 7$ & $2^{14}3^55^27$ & $2^73^2$ & $12$\\
\hline
\end{tabular}
\end{center}

As an immediate corollary we obtain
\begin{corollary}
\label{FiniteOrder}
Let $X_n$ and $Y_n$ be Coxeter graphs with $n\geq 4$ vertices. Assume that $X_n$ and $Y_n$ are isomorphic to graphs of type $A_n, B_n, D_n, E_6, E_7, E_8$ or $F_4$. If $|W_{X_n}|=|W_{Y_n}|$, then $X_n\cong Y_n$. 
\end{corollary}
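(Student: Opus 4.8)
The plan is to read off the group orders from the table preceding the statement and to check directly that, for each fixed $n\geq 4$, the types that can occur on $n$ vertices have pairwise distinct orders; since two graphs of the listed types are isomorphic as soon as they are of the same type, this is exactly what is needed. The types that genuinely occur depend on $n$: for $n\notin\{4,6,7,8\}$ only the three infinite families $A_n$, $B_n$, $D_n$ are available, whereas for $n=4,6,7,8$ there is in addition the single exceptional type $F_4$, $E_6$, $E_7$, $E_8$, respectively. I would therefore first separate the three infinite families from one another, and then separate each exceptional type from the three classical ones on the same vertex set.

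For the infinite families I would factor out the common term $n!$. From $|W_{A_n}|=(n+1)\cdot n!$, $|W_{B_n}|=2^{n}\cdot n!$ and $|W_{D_n}|=2^{n-1}\cdot n!$ the claim reduces to the assertion that $n+1$, $2^{n-1}$ and $2^{n}$ are pairwise distinct for $n\geq 4$. This is elementary: one has $2^{n}=2\cdot 2^{n-1}\neq 2^{n-1}$, while $2^{n-1}>n+1$ for all $n\geq 4$ by a one-line induction, and this single inequality separates $A_n$ from both $B_n$ and $D_n$ at once. It is precisely here that the hypothesis $n\geq 4$ enters: at $n=3$ one has $2^{n-1}=4=n+1$, which is the numerical shadow of the exceptional graph isomorphism $A_3\cong D_3$ and of the coincidence $|W_{A_3}|=|W_{D_3}|=24$.

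For the exceptional types the cleanest device is the $3$-adic valuation $v_3$. In each of the four relevant cases the $3$-adic valuation of the exceptional order strictly exceeds the $3$-adic valuation of every classical order on the same number of vertices, so no equality of orders is possible. Concretely, $v_3(|W_{A_n}|)=v_3((n+1)!)$ and $v_3(|W_{B_n}|)=v_3(|W_{D_n}|)=v_3(n!)$, and one reads from the table that $v_3(|W_{F_4}|)=2$, $v_3(|W_{E_6}|)=4$, $v_3(|W_{E_7}|)=4$ and $v_3(|W_{E_8}|)=5$, each of which exceeds the largest classical valuation $v_3((n+1)!)$ at $n=4,6,7,8$ (namely $1,2,2,4$). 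Combining the two steps shows that equal orders force equal types, which is the assertion.

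The argument involves no genuine obstacle; it is a finite case analysis built on the explicit order formulas. The only points requiring mild care are bookkeeping, that is recording which types actually occur for a given $n$, and carrying out the exceptional comparisons via prime valuations rather than by multiplying the numbers out, which keeps the few remaining checks transparent and makes the role of the hypothesis $n\geq 4$ visible.
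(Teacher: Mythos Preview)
Your proposal is correct and follows the same route as the paper, which treats the statement as an immediate consequence of the table of orders and gives no proof at all. You have simply supplied the explicit verification that the paper omits, using the clean device of the $3$-adic valuation for the exceptional types; nothing more is needed.
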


Here we are in particular interested in the structure of virtually abelian Coxeter groups. The next lemma will needed later on.

\begin{lemma}\label{factoriso}
    Let $A_1,A_2$ be abelian torsion free groups and $F_1,F_2$ finite groups. If $A_1\rtimes F_1\cong A_2\rtimes F_2$, then $F_1\cong F_2$. 
    %If $A_i$ is finitely generated we also obtain $A_1\cong A_2$.
    Furthermore, there exist injective group homomorphisms $\varphi_1\colon A_1\hookrightarrow A_2$ and $\varphi_2\colon A_2\hookrightarrow A_1$. In particular, if $A_1$ and $A_2$ are finitely generated, then $rank(A_1)=rank(A_2)$.
\end{lemma}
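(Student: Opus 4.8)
The plan is to realize both semidirect-product decompositions inside a single ambient group. I would fix an isomorphism $A_1\rtimes F_1\cong A_2\rtimes F_2$ and call the resulting group $G$; then $A_1,F_1,A_2,F_2$ are all subgroups of $G$, with $A_1,A_2$ normal of finite index and $A_i\cap F_i=\{1\}$, $A_iF_i=G$. First I would record that $N:=A_1\cap A_2$ is a normal subgroup of $G$ which is torsion free and abelian, being a subgroup of $A_1$, and which has finite index in $G$, being an intersection of two finite-index subgroups; consequently $N$ has finite index in each $A_i$ (finite index of $N$ in $A_1$ uses only that $A_2$ has finite index in $G$, and symmetrically).

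The crux is the isomorphism $F_1\cong F_2$, and this is the step I would single out as the main idea, short as it is. Consider the projection $\pi\colon G\to G/A_2$; since $A_2$ is normal with complement $F_2$, we have $G/A_2\cong F_2$. I would restrict $\pi$ to $F_1$. Its kernel is $F_1\cap A_2$, and since every non-trivial element of $F_1$ has finite order while $A_2$ is torsion free, this intersection is trivial. Hence $\pi|_{F_1}$ is injective and yields $F_1\hookrightarrow F_2$, so $|F_1|\le|F_2|$. Exchanging the roles of the two decompositions gives an injection $F_2\hookrightarrow F_1$ and $|F_2|\le|F_1|$. Therefore $|F_1|=|F_2|$, and an injective homomorphism between finite groups of equal order is a bijection, so $F_1\cong F_2$.

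For the embeddings of the abelian parts I would use power maps. Set $m:=[A_1:N]<\infty$. For $a\in A_1$ the image of $a$ in the finite group $A_1/N$ has order dividing $m$, so $a^m\in N\subseteq A_2$; I define $\varphi_1\colon A_1\to A_2$ by $\varphi_1(a)=a^m$. This is a homomorphism because $A_1$ is abelian and is injective because $A_1$ is torsion free. Symmetrically, with $m':=[A_2:N]$ the assignment $\varphi_2(a)=a^{m'}$ is an injective homomorphism $A_2\hookrightarrow A_1$. Finally, if $A_1\cong\mathbb{Z}^{r_1}$ and $A_2\cong\mathbb{Z}^{r_2}$ are finitely generated, tensoring $\varphi_1$ and $\varphi_2$ with $\mathbb{Q}$ produces injective $\mathbb{Q}$-linear maps $\mathbb{Q}^{r_1}\hookrightarrow\mathbb{Q}^{r_2}$ and back, whence $r_1=r_2$; equivalently, $N$ has finite index in both $A_1$ and $A_2$ and so carries their common rank.

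I expect no serious obstacle. The only point that needs care is to keep $F_i$ and $A_i$ inside one fixed copy of $G$, so that the torsion-versus-torsion-free dichotomy can be applied to $F_1\cap A_2$; this is exactly what forces $F_1\cong F_2$. The remaining assertions are routine once $N$ is seen to have finite index in each $A_i$.
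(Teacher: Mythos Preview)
Your proof is correct and follows essentially the same approach as the paper: both show $F_1\cong F_2$ by observing that the projection $G\to G/A_2\cong F_2$ is injective on $F_1$ (because $A_2$ is torsion free), and both produce the embeddings of the abelian parts via power maps that kill the finite quotient. The only cosmetic difference is that you factor through $N=A_1\cap A_2$ and raise to the power $m=[A_1:N]$, whereas the paper raises directly to the power $k=|F_1|$; since $m\mid k$ these amount to the same maneuver.
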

\begin{proof}
Let $f\colon A_1\rtimes F_1\to A_2\rtimes F_2$ denote an isomorphism.

We have the canonical projections $\pi_i\colon A_i\rtimes F_i\to F_i$. Since $A_i$ is torsion free, the kernel of $\pi_i$ is torsion free, so given any finite subgroup $H$ of $A_i\rtimes F_i$, the restriction of $\pi_i$ to $H$ is injective. We obtain:
$$|\pi_2\circ f(\{1\}\rtimes F_1)|\leq |F_2|\quad \text{and} \quad |\pi_1\circ f^{-1}(\{1\}\rtimes F_2)|\leq |F_1|$$
Hence $|F_1|=|F_2|$. Moreover $F_1\cong (\{1\}\rtimes F_1)$ and due to the number of elements being equal, $\pi_2\circ f(\{1\}\rtimes F_1)=F_2$ which means $\pi_2 \circ f$ restricted to $\{1\}\rtimes F_1$ is an isomorphism, which is what we had to show.

Let $k$ denote the order of the group $F_1$. We define $\varphi_1\colon A_1\to A_2\rtimes F_2$ by $\varphi_1(a):=f(a)^k$. Since $A_1$ is abelian and torsion free, it follows that $\varphi_1$ is an injective group homomorphism because of the following reason: $f$ is an isomorphism, hence $f(A_1)$ is also torsion free. So if $f(a)^k=1$, then $f(a)=1$ and again since $f$ is an isomorphism we obtain $a=1$. Furthermore, $\varphi_1(A_1)\subseteq A_2\times\left\{1\right\}$.
The map $\varphi_2\colon A_2\to A_1\rtimes F_1$ is defined in the same way, $\varphi_2(a)=(f^{-1}(a))^k$. The map $\varphi_2$ is also an injective group homomorphism with $\varphi_2(A_2)\subseteq A_1\times\left\{1\right\}$.

The last statement of the lemma follows from the fact that if $\Z^n\hookrightarrow \Z^m$, then $n\leq m$.
\end{proof}

As a corollary we obtain
\begin{corollary}
\label{AffineCoxeterGroups}
Let $W_\Gamma$ and $W_\Delta$ be Coxeter groups where $\Gamma$ and $\Delta$ are graphs in Figure 1 and  $W_\Gamma\cong\mathbb{Z}^n\rtimes W_{\Omega_1}$, $W_\Delta\cong\mathbb{Z}^m\rtimes W_{\Omega_2}$ be the corresponding semidirect decompositions from Proposition \ref{IrrAffineAb}. Then the following statements are equivalent
\begin{enumerate}
\item[(i)] $W_\Gamma\cong W_\Delta$
\item[(ii)] $n=m$, $|W_{\Omega_1}|=|W_{\Omega_2}|$ and $|W_\Gamma^{ab}|=|W_\Delta^{ab}|$
\item[(iii)] $\Gamma\cong\Delta$
\end{enumerate}
\end{corollary}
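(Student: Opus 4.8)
The plan is to prove the cycle of implications $(iii)\Rightarrow(i)\Rightarrow(ii)\Rightarrow(iii)$. The implication $(iii)\Rightarrow(i)$ is immediate: an isomorphism of edge-labeled graphs induces an isomorphism of the associated Coxeter presentations, so $\Gamma\cong\Delta$ forces $W_\Gamma\cong W_\Delta$. For $(i)\Rightarrow(ii)$, suppose $W_\Gamma\cong W_\Delta$. Since abelianization is an isomorphism invariant we get $W_\Gamma^{ab}\cong W_\Delta^{ab}$, hence $|W_\Gamma^{ab}|=|W_\Delta^{ab}|$. For the other two equalities I would feed the isomorphism $\mathbb{Z}^n\rtimes W_{\Omega_1}\cong\mathbb{Z}^m\rtimes W_{\Omega_2}$ into Lemma \ref{factoriso}: as $\mathbb{Z}^n,\mathbb{Z}^m$ are torsion-free abelian and $W_{\Omega_1},W_{\Omega_2}$ are finite, the lemma yields $W_{\Omega_1}\cong W_{\Omega_2}$ (in particular $|W_{\Omega_1}|=|W_{\Omega_2}|$) and the equality of ranks $n=m$.

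The real content is $(ii)\Rightarrow(iii)$, which I would prove by a case analysis organized by the common rank $n$. By Proposition \ref{IrrAffineAb} the rank of the free abelian part equals the subscript of the affine type, so $n=m$ already fixes the number of vertices $n+1$ and leaves only finitely many candidate types for each of $\Gamma,\Delta$. The guiding principle is that, for a fixed rank, the pair $\bigl(|W_\Omega|,|W^{ab}|\bigr)$ is a complete invariant of the affine type. For rank $n\geq 4$ the candidate finite parts lie among $A_n,B_n,D_n$ together with the exceptional $F_4,E_6,E_7,E_8$ occurring at $n=4,6,7,8$; by Corollary \ref{FiniteOrder} these are pairwise distinguished by $|W_\Omega|$, the single exception being that $\widetilde{B}_n$ and $\widetilde{C}_n$ both have finite part $W_{B_n}$. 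This last ambiguity is resolved precisely by the abelianization, since $W^{ab}_{\widetilde{B}_n}\cong(\mathbb{Z}/2\mathbb{Z})^2$ while $W^{ab}_{\widetilde{C}_n}\cong(\mathbb{Z}/2\mathbb{Z})^3$ (Proposition \ref{IrrAffineAb}). Hence $\bigl(|W_\Omega|,|W^{ab}|\bigr)$ determines the type for $n\geq 4$, giving $\Gamma\cong\Delta$.

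For the small ranks $n\leq 3$, where Corollary \ref{FiniteOrder} does not apply, I would argue directly from the order table preceding Corollary \ref{FiniteOrder}. Rank $1$ admits only $\widetilde{A}_1$. Rank $2$ admits $\widetilde{A}_2,\widetilde{C}_2,\widetilde{G}_2$, whose finite parts $W_{A_2},W_{B_2},W_{G_2}$ have the distinct orders $6,8,12$, so $|W_\Omega|$ alone separates them. Rank $3$ admits $\widetilde{A}_3$ (finite part of order $24$) together with $\widetilde{B}_3,\widetilde{C}_3$ (finite part $W_{B_3}$ of order $48$), the latter two again separated by their abelianizations $(\mathbb{Z}/2\mathbb{Z})^2$ and $(\mathbb{Z}/2\mathbb{Z})^3$. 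In each case the pair $\bigl(|W_\Omega|,|W^{ab}|\bigr)$ singles out a unique type, so $\Gamma\cong\Delta$ and the cycle closes.

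I expect the main obstacle to be essentially bookkeeping rather than a single hard step: one must verify that the coincidence $\widetilde{B}_n\leftrightarrow\widetilde{C}_n$ (shared finite Weyl group $W_{B_n}$) is the \emph{only} collision that survives the invariant $|W_\Omega|$, and that it is genuinely broken by the abelianization order. This is exactly why condition $(ii)$ must record $|W^{ab}|$ in addition to the rank $n$ and the order $|W_\Omega|$; dropping the abelianization would leave $\widetilde{B}_n$ and $\widetilde{C}_n$ indistinguishable, and dropping the rank would fail to separate, for instance, the exceptional types from the classical ones of different subscript.
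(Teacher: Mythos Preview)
Your proposal is correct and follows essentially the same approach as the paper: the same cycle of implications, the same appeal to Lemma~\ref{factoriso} for $(i)\Rightarrow(ii)$, and the same rank-by-rank case analysis for $(ii)\Rightarrow(iii)$ using Corollary~\ref{FiniteOrder} for $n\geq 4$ with the $\widetilde{B}_n/\widetilde{C}_n$ ambiguity resolved by the abelianization. The only cosmetic difference is at $n=3$, where the paper separates all three types by abelianization alone (orders $2,4,8$), whereas you first split off $\widetilde{A}_3$ by $|W_\Omega|$ and then use the abelianization for the remaining pair; both arguments are valid.
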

\begin{proof}
First, we assume that $W_\Gamma\cong W_\Delta$. Since isomorphic groups have isomorphic abelianizations we know that $|W_\Gamma^{ab}|=|W_\Delta^{ab}|$. By Lemma \ref{factoriso} it follows that $n=m$ and $|W_{\Omega_1}|=|W_{\Omega_2}|$. This proves $(i)\Rightarrow (ii)$.

Now we prove $(ii)\Rightarrow (iii)$. We have to consider several cases. If $n=1$, then $\Gamma\cong\Delta\cong\widetilde{A_1}$.
 If $n=2$, then we have three possibilities for $W_\Gamma$: $W_\Gamma\cong W_{\widetilde{G_2}}$ or $W_\Gamma\cong W_{\widetilde{A_2}}$ or $W_\Gamma\cong W_{\widetilde{C_2}}$. Since $|W_{G_2}|=12$, $|W_{A_2}|=6$ and $|W_{B_2}|=8$, we obtain $\Gamma\cong\Delta$. If $n=3$, then we have $3$ possibilities  for $W_\Gamma$: $W_\Gamma\cong W_{\widetilde{A_3}}$ or $W_\Gamma\cong W_{\widetilde{B_3}}$ or $W_\Gamma\cong W_{\widetilde{C_3}}$. Since the abelianizations of these groups are not isomorphic we conclude that $\Gamma\cong\Delta$. 
 If $n\geq 4$, then  by Corollary \ref{FiniteOrder} it follows that $\Omega_1\cong\Omega_2$. Assume for contradiction that $\Gamma$ is not isomorphic to $\Delta$. Then $\Gamma\cong\widetilde{B_n}$ and $\Delta\cong\widetilde{C_n}$. In particular $|W^{ab}_\Gamma|\neq |W^{ab}_\Delta|$, a contradiction.
 
 The last statement $(iii)\Rightarrow (i)$ is obvious.
\end{proof}

Corollary \ref{AffineCoxeterGroups} is related to the notion of rigidity of Coxeter groups. A Coxeter group $W_\Gamma$ is called \emph{rigid} if whenever $W_\Gamma\cong W_\Delta$ then $\Gamma\cong\Delta$. It was proven in \cite{Charney} that irreducible affine Coxeter groups are rigid.

By definition, a group $G$ is called \emph{directly decomposable} if there exist non-trivial groups $A$ and $B$ such that $G\cong A\times B$, otherwise $G$ is called \emph{directly indecomposable}. 

\begin{theorem}(\cite[Theorem 2.17 and Theorem 3.3]{Nuida})
\label{Nuida}
Let $W_\Gamma$ be an irreducible Coxeter group. The group $W_\Gamma$ is directly indecomposable if and only if $\Gamma$ is not isomorphic to $B_{2k+1}$, $I_2(4k+2)$, $k\geq 1$, $E_7$, $H_3$ from Figure 2. (Note that in our notation $I_2(6)=G_2$).

Moreover, $W_\Gamma$ is directly decomposable in products of Coxeter groups if and only if $\Gamma$ is of type $B_{2k+1}$ or $I_2(4k+2)$, $k\geq 1$ and the direct decomposition of these groups is as follows: $W_{B_{2k+1}}\cong\Z/2\Z\times W_{D_{2k+1}}$, $W_{I_2(4k+2)}\cong\Z/2\Z\times W_{I_2(2k+1)}$.
\end{theorem}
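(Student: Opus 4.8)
The plan is to organise the whole classification around the centre of an irreducible Coxeter group, since a direct factor forces a corresponding factorisation of the centre, and here the centre is tiny. First I would record the centre: using the geometric representation $V$ and, in the finite case, the longest element $w_0$, one shows that for irreducible $W_\Gamma$ the centre $Z(W_\Gamma)$ equals $\langle w_0\rangle\cong\Z/2\Z$ precisely when $W_\Gamma$ is finite and $w_0$ acts as $-\mathrm{id}$ on $V$ (types $A_1,B_n,D_{2k},E_7,E_8,F_4,G_2,H_3,H_4,I_2(2m)$), and is trivial otherwise — in particular for every infinite irreducible $W_\Gamma$.

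For finite irreducible $W_\Gamma$ I would then show that any nontrivial direct factor is central, by a representation-theoretic argument. The reflection representation $V$ is absolutely irreducible, so after complexifying a decomposition $W_\Gamma\cong A\times B$ yields $V\cong V_A\otimes V_B$ with $V_A,V_B$ irreducible. Since the reflections generate $W_\Gamma$, some reflection $s=(a_s,b_s)$ has $a_s\neq 1$; as $s$ acts on $V$ with a one-dimensional $(-1)$-eigenspace, counting the $(-1)$-eigenspace of $\rho_A(a_s)\otimes\rho_B(b_s)$ in terms of the $\pm1$-eigenvalues of the involutions $\rho_A(a_s)$ and $\rho_B(b_s)$ forces $\dim V_A=1$ or $\dim V_B=1$. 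A one-dimensional factor acts by the scalar $-\mathrm{id}$, so by faithfulness that factor equals $\langle w_0\rangle$ with $w_0=-\mathrm{id}$. Hence a finite irreducible $W_\Gamma$ is directly decomposable only when $-\mathrm{id}\in W_\Gamma$ (ruling out $A_n$ with $n\geq 2$, $D_{2k+1}$, $E_6$ and $I_2(2m+1)$ at once), and then only by splitting off the centre.

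Next I would decide when the central involution $z=w_0=-\mathrm{id}$ actually splits off and whether the complement is a Coxeter group. Since $z$ is central of order two, $W_\Gamma\cong\langle z\rangle\times C$ holds if and only if some homomorphism $\phi\colon W_\Gamma\to\Z/2\Z$ satisfies $\phi(z)\neq 0$, that is, the image of $w_0$ in $W_\Gamma^{ab}$ is nontrivial, in which case $C=\ker\phi$. Computing $W_\Gamma^{ab}$ via Proposition \ref{Abel} and the image of $w_0$ (governed by $\ell(w_0)=\#\{\text{positive roots}\}$ and the even labels of $\Gamma$) singles out exactly $B_{2k+1}$, $I_2(4k+2)$, $E_7$ and $H_3$. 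For $B_{2k+1}$ the element $-\mathrm{id}$ has an odd number of sign changes and hence lies outside $W_{D_{2k+1}}$, giving $W_{B_{2k+1}}\cong\Z/2\Z\times W_{D_{2k+1}}$; the dihedral identity gives $W_{I_2(4k+2)}\cong\Z/2\Z\times W_{I_2(2k+1)}$; and for $E_7,H_3$ the complements are $\mathrm{Sp}_6(\mathbb{F}_2)$ and $A_5$, which are perfect and hence not Coxeter groups, so these four types are directly decomposable but only the first two decompose into Coxeter factors.

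The remaining and hardest step is that every infinite irreducible $W_\Gamma$ is directly indecomposable, which is where I expect the real obstacle to lie. Here the reflection form may be degenerate or indefinite, there is no central $-\mathrm{id}$ to exploit, and the clean tensor-product argument breaks down. I would instead argue combinatorially with the reflections: for a hypothetical $W_\Gamma\cong A\times B$ and a reflection $s=(a_s,b_s)$, I would analyse the dihedral subgroups $\langle s,t\rangle$ attached to the edges of $\Gamma$ and the conjugacy of reflections along odd-labelled edges to show that, modulo the trivial centre, all generators lie on a single factor; connectedness of $\Gamma$ then collapses the other factor. Turning this into a rigorous argument — controlling the action on the root system or Tits cone rather than on a finite-dimensional module — is precisely the content of Nuida's theorem and the genuinely difficult part of the proof.
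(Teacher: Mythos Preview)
The paper does not prove this statement at all: Theorem~\ref{Nuida} is quoted verbatim from Nuida's paper \cite{Nuida} and used as a black box in the proof of Theorem~1.2. There is therefore no ``paper's own proof'' to compare your proposal against.

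As to the proposal itself: your outline for the finite case is reasonable and close in spirit to how such results are usually approached (via the centre and the irreducibility of the reflection representation), though the tensor-product step needs some care --- you need faithfulness on each factor to conclude that a one-dimensional factor is exactly $\langle w_0\rangle$. Your identification of which types admit a splitting of $w_0$ is correct, and the explicit decompositions for $B_{2k+1}$ and $I_2(4k+2)$ are standard. However, for the infinite case you essentially concede the point and say ``this is the content of Nuida's theorem'', which is circular: you are trying to prove Nuida's theorem. The honest status of your proposal is thus a partial sketch of the finite case together with an acknowledgement that the infinite case remains to be done. If you want to complete it, Nuida's actual argument in \cite{Nuida} works with the parabolic closure and the essential/normalizer structure of reflection subgroups rather than the geometric representation directly; that machinery is what handles the infinite irreducible types uniformly.
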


\section{Just infinity}
In the study of infinite groups one is interested in understanding all non-trivial normal subgroups. Sometimes for an infinite group $G$ every non-trivial normal subgroup has finite index. This is the notion of just infinity which was introduced by Baumslag.
\begin{definition}
A group $G$ is called \emph{just infinite} if $G$ is infinite and all non-trivial normal subgroups of $G$ are of finite index.
\end{definition}
In this chapter we discuss Coxeter groups in relation to the property of being just infinite. Since we are in particular interested in quotients of Coxeter groups, we discuss some explicit methods of constructing a quotient of a Coxeter group, especially ones giving us nice quotients, i.e. again Coxeter groups.

\begin{proposition}(\cite[Lemma 3.6]{Cooper})
    Let $W_\Gamma$ be a Coxeter group. Suppose there are two vertices $s,t\in V(\Gamma)$ such that the label of the edge $\left\{s,t\right\}$ is $\infty$. Then either $W_\Gamma$ is virtually $\mathbb{Z}$ or there exists a surjection $\pi\colon W_\Gamma\twoheadrightarrow W_\Omega$, where $W_\Omega$ infinite and $V(\Omega)=V(\Gamma)$ and $E(\Omega)=E(\Gamma)\cup\left\{\left\{s,t\right\}\right\}$ and the edge label of $\left\{s,t\right\}$ is any integer $\geq 7$.
\end{proposition}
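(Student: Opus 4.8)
The plan is to obtain the surjection $\pi$ directly from the presentations and then to decide infiniteness of the target by a short case analysis on the connected component of $\Gamma$ carrying the edge $\{s,t\}$, feeding everything through the classification of finite Coxeter groups in Theorem~\ref{FiniteVirtuallyAbelian}.

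First I would build $\pi$. Fix an integer $m\geq 7$ and let $\Omega$ be the labeled graph on the same vertices and edges as $\Gamma$ but with the label of $\{s,t\}$ changed from $\infty$ to $m$. Then $W_\Omega$ is defined by precisely the relators of $W_\Gamma$ together with the single additional relator $(st)^m$. Consequently the identity map on the common generating set $V(\Gamma)=V(\Omega)$ kills every defining relator of $W_\Gamma$ inside $W_\Omega$, and the universal property of presentations yields a surjection $\pi\colon W_\Gamma\twoheadrightarrow W_\Omega$. This part is routine and works for every $m\geq 7$; the content lies entirely in controlling whether $W_\Omega$ is infinite.

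For that I would let $C$ be the connected component of $\Gamma$ containing $s$ and $t$ (so $|V(C)|\geq 2$) and distinguish two cases. If $|V(C)|\geq 3$, the corresponding component $C'$ of $\Omega$ has at least three vertices and carries the edge $\{s,t\}$ with label $m\geq 7$. Inspecting Figure~2 shows that every connected spherical diagram on at least three vertices has all edge labels at most $5$; hence $C'$ is not spherical, so $W_{C'}$ is infinite by Theorem~\ref{FiniteVirtuallyAbelian}(1), and since $W_{C'}$ is a direct factor of $W_\Omega$ the group $W_\Omega$ is infinite. If instead $|V(C)|=2$, then $C$ is the lone $\infty$-edge $\{s,t\}$ and $\Gamma$ is the disjoint union of $C$ with the remaining graph $\Gamma'$, so $W_\Gamma\cong D_\infty\times W_{\Gamma'}$. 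When $W_{\Gamma'}$ is finite (including $\Gamma'=\varnothing$), $W_\Gamma$ is the direct product of $D_\infty$ with a finite group and is therefore virtually $\mathbb{Z}$, which is the first alternative; when $W_{\Gamma'}$ is infinite, $W_\Omega\cong W_{I_2(m)}\times W_{\Gamma'}$ inherits the infinite factor $W_{\Gamma'}$ and is infinite, which is the second. These cases are exhaustive, so the stated dichotomy follows.

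The one genuinely delicate point, and the step I would expect to do the real work, is the edge-label observation underpinning the case $|V(C)|\geq 3$: one has to read off from the finite-type classification that no spherical diagram on three or more vertices carries a bond of strength at least $6$. This is exactly what makes relabeling to any $m\geq 6$ (in particular $m\geq 7$) obstruct finiteness; the remainder is bookkeeping with the decomposition of a Coxeter group into the direct product over the connected components of its diagram.
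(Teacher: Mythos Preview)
Your argument is correct. The surjection comes for free from the presentations, and your dichotomy on the size of the connected component $C$ containing $s$ and $t$ cleanly settles infiniteness of $W_\Omega$: the observation that no spherical diagram on three or more vertices carries an edge of label exceeding $5$ is exactly the right input from Figure~2, and the two-vertex case collapses to $W_\Gamma\cong D_\infty\times W_{\Gamma'}$, where finiteness of $W_{\Gamma'}$ decides which alternative holds.

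As for comparison: the paper does not give its own proof of this proposition. It is quoted directly from \cite[Lemma 3.6]{Cooper} and used as a black box, so there is nothing in the present paper to compare your argument against. Your write-up is a valid self-contained proof of the cited statement, relying only on the classification in Theorem~\ref{FiniteVirtuallyAbelian} and the standard direct-product decomposition over connected components.
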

So if the graph $\Gamma$ used to define the Coxeter group has a label $\infty$, then either $W_\Gamma$ is already virtually $\mathbb{Z}$ or we can explicitly construct an infinite quotient.

Furthermore a very similar idea works in case there are non-prime edge-labels.
\begin{proposition}(\cite[§3]{Mihalik})
    Let $W_\Gamma$ denote a Coxeter group and suppose there is an edge label $m(\left\{v,w\right\})$ which is not prime. Let $\Gamma'$ denote the graph obtained from $\Gamma$ by replacing the edge label $m(\left\{v,w\right\})$ by one of its prime factors. Then we obtain a surjection $\pi\colon W_\Gamma \twoheadrightarrow W_{\Gamma'}$.
\end{proposition}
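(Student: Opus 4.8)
The plan is to construct $\pi$ explicitly on generators using the universal property of group presentations. Since $\Gamma'$ differs from $\Gamma$ only in the label carried by the single edge $\{v,w\}$, the two graphs have identical vertex sets and identical edge sets, so both $W_\Gamma$ and $W_{\Gamma'}$ are generated by $V(\Gamma)=V(\Gamma')$. I would define $\pi$ by sending each generator $s\in V(\Gamma)$ to the generator of the same name in $W_{\Gamma'}$, and then check that this assignment carries every defining relation of $W_\Gamma$ to the identity; by the universal property of the Coxeter presentation this guarantees that $\pi$ extends uniquely to a group homomorphism.

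The verification splits into the cases dictated by the presentation. Each relation $s^2$ for $s\in V(\Gamma)$ holds in $W_{\Gamma'}$ because the generators there are involutions as well. For a pair $\{s,t\}\notin E(\Gamma)$ the relation $(st)^2$ must be satisfied; since $E(\Gamma)=E(\Gamma')$, the same pair is a non-edge in $\Gamma'$, so indeed $(st)^2=1$ in $W_{\Gamma'}$. For an edge $\{s,t\}\neq\{v,w\}$ with finite label, the label is unchanged, so $(st)^{m(\{s,t\})}$ is literally a defining relation of $W_{\Gamma'}$; edges labelled $\infty$ impose no relation on either side.

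The only case requiring an observation is the edge $\{v,w\}$ itself. Writing $m=m(\{v,w\})$ and letting $p$ be the chosen prime factor, we have $m=p\cdot k$ for some integer $k\geq 2$ (here $k\geq 2$ precisely because $m$ is not prime). In $W_{\Gamma'}$ the defining relation for this edge is $(vw)^p=1$, whence $(vw)^m=\big((vw)^p\big)^k=1$. Thus the relation $(vw)^m$ of $W_\Gamma$ is also satisfied in $W_{\Gamma'}$. With all defining relations verified, $\pi$ is a well-defined homomorphism, and it is surjective since its image contains every generator of $W_{\Gamma'}$.

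I do not expect a genuine obstacle: the statement follows at once from the divisibility $p\mid m$ together with the universal property of presentations. The only point one must not overlook is that altering a single label cannot break any of the remaining relations, which is exactly why I would itemize the relation types as above rather than argue abstractly; this also makes transparent that the construction is independent of which prime factor of $m$ is selected.
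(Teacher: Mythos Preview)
Your argument is correct and is exactly the standard verification via the universal property of the Coxeter presentation. The paper itself does not supply a proof of this proposition; it simply cites \cite[§3]{Mihalik}, so there is nothing further to compare. One cosmetic remark: in the paper's convention edge labels lie in $\mathbb{N}_{\geq 3}\cup\{\infty\}$, so if the chosen prime factor happens to be $p=2$ the edge $\{v,w\}$ becomes a non-edge in $\Gamma'$ rather than an edge labelled $2$; your relation check $(vw)^m=\big((vw)^p\big)^k=1$ goes through unchanged in that case, so this does not affect the argument.
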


For an additional but related way to obtain a quotient isomorphic to a Coxeter group we recall the definition of special parabolic subgroups of a Coxeter group. Given a Coxeter group $W_\Gamma$ and $X\subseteq V(\Gamma)$, the subgroup generated by $X$ is called a \emph{special parabolic subgroup} and it is canonically isomorphic to $W_\Omega$ where $\Omega$ is a full subgraph of $\Gamma$ with the vertex set $X$. 
\begin{proposition}(\cite[Proposition 2.1]{Gal})
Let $W_\Gamma$ be a Coxeter group and $W_\Omega$ be a special parabolic subgroup. If for every $\left\{v, w\right\}\in E(\Gamma)$ where $v\in V(\Omega), w\in V(\Gamma)-V(\Omega)$ the edge label is even or $\infty$, then
 there exists a surjective group homomorphism $\varphi\colon W_\Gamma \twoheadrightarrow W_\Omega$.
\end{proposition}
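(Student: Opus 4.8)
The plan is to build $\varphi$ directly on generators and verify that it descends to $W_\Gamma$ via the universal property of the Coxeter presentation. Concretely, I would set $\varphi(v) = v$ for $v \in V(\Omega)$ — viewing $W_\Omega$ as the special parabolic subgroup generated by $V(\Omega)$, which by the preceding remark is canonically the Coxeter group on the full subgraph $\Omega$ — and $\varphi(w) = 1$ for $w \in V(\Gamma) - V(\Omega)$. Since $W_\Gamma$ is given by the standard presentation, it suffices to check that every defining relator maps to $1$ in $W_\Omega$; then $\varphi$ extends uniquely to a group homomorphism.

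The verification splits according to where the vertices involved lie. The relators $v^2$ are immediate, since both $v$ and $1$ are involutions. For a relator $(vw)^{m}$ coming from a non-edge ($m=2$) or from an edge with finite label $m$, there are three cases. If $v, w \in V(\Omega)$, then because $\Omega$ is a \emph{full} subgraph the edge/non-edge status and the label are inherited, so $(vw)^m$ is already a relation of $W_\Omega$. If $v, w \notin V(\Omega)$, then $\varphi(v)\varphi(w) = 1$ and the relator maps to $1$. The only delicate case is the \emph{mixed} one, say $v \in V(\Omega)$ and $w \notin V(\Omega)$: here the relator maps to $v^m$, which equals $1$ in $W_\Omega$ precisely when $m$ is even. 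This is exactly where the hypothesis enters — for such crossing edges the label is even or $\infty$; in the even case $v^m = (v^2)^{m/2} = 1$, and in the $\infty$ case there is simply no relator of this form to check. Crossing non-edges give $m=2$, which is even, so they are automatically fine.

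Finally, surjectivity is immediate, since the image of $\varphi$ contains $V(\Omega)$, which generates $W_\Omega$. I expect the only genuine content to be the mixed-edge case, and the argument pinpoints why the even/$\infty$ hypothesis is both necessary and sufficient there: collapsing the outside vertices to $1$ sends a crossing relator $(vw)^m$ to $v^m$, which survives nontrivially in $W_\Omega$ unless $m$ is even. The one subtlety worth stating carefully is the use of $\Omega$ being a full subgraph, which guarantees that both the edges and the non-edges among $V(\Omega)$, together with their labels, agree in $\Gamma$ and in $\Omega$, so that no same-side relator produces an obstruction.
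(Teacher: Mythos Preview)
Your proof is correct: defining $\varphi$ on generators by $\varphi(v)=v$ for $v\in V(\Omega)$ and $\varphi(w)=1$ for $w\notin V(\Omega)$, then checking the defining relators, is exactly the right approach, and your case analysis handles the mixed-edge situation cleanly. Note, however, that the paper does not actually give its own proof of this proposition---it merely cites \cite[Proposition 2.1]{Gal}---so there is nothing to compare against; your argument is the standard one and presumably coincides with Gal's.
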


For the next construction, it is a lot easier to work with a different convention for the defining graph $\Gamma$ of the Coxeter group. Let $\Delta$ be the graph obtained from $\Gamma$ by removing all edges labeled infinity and by adding an edge with label $2$ between all vertices that were previously not connected by an edge.
\begin{proposition}
    Let $W_\Delta$ be a Coxeter group. Let $p\in \mathbb{P}-\{2\}$ denote a prime number. We denote by $\Delta^{(p)}$ the graph obtained from $\Delta$ by removing all edges labeled $p$. If $\Delta^{(p)}$ has three or more connected components, then there exists a projection $\pi\colon W_\Delta\to W_{\Delta'}$, where $W_{\Delta'}$ is an infinite Coxeter group with $|V(\Delta')|=3$.
\end{proposition}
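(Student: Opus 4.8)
The plan is to realize $W_{\Delta'}$ as the image of a folding map that collapses whole connected components of $\Delta^{(p)}$ to single generators. First I would record the combinatorial fact forced by the definition of $\Delta$: since $\Delta$ carries an edge of finite label $\geq 2$ between every pair of non-commuting generators and has a non-edge exactly for the pairs labelled $\infty$ in $\Gamma$, the graph $\Delta^{(p)}$ is obtained by deleting only the edges of label $p$. Consequently, if $v$ and $w$ lie in different connected components of $\Delta^{(p)}$, then either $\{v,w\}$ is not an edge of $\Delta$, or $\{v,w\}$ is an edge of label exactly $p$; an edge of any other label would survive in $\Delta^{(p)}$ and merge the two components. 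This is the observation that makes component-collapsing compatible with the Coxeter relations.

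Next I would use the hypothesis that $\Delta^{(p)}$ has at least three components $C_1,\ldots,C_k$ ($k\geq 3$) to fix a partition of $\{C_1,\ldots,C_k\}$ into three nonempty blocks, and let $V_1,V_2,V_3$ denote the corresponding sets of vertices. I define $\Delta'$ to be the graph on three vertices $y_1,y_2,y_3$, placing an edge of label $p$ between $y_a$ and $y_b$ whenever some vertex of $V_a$ is joined in $\Delta$ to some vertex of $V_b$, and no edge otherwise; by the observation above every such connecting edge has label $p$, so this is well defined. I then set $\pi\colon W_\Delta\to W_{\Delta'}$ on generators by $\pi(v)=y_a$ for $v\in V_a$. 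To see that $\pi$ is a homomorphism I verify the defining relations of $W_\Delta$: the relation $v^2$ maps to $y_a^2=1$; a relation $(vw)^m$ with $v,w$ in the same block maps to $(y_ay_a)^m=1$ and imposes nothing; and a relation $(vw)^m$ with $v\in V_a$, $w\in V_b$, $a\neq b$ has $m=p$ and maps to $(y_ay_b)^p=1$, which holds by construction. Surjectivity is immediate since each block is nonempty, and $|V(\Delta')|=3$ by design.

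Finally I would confirm that $W_{\Delta'}$ is infinite, which is precisely where the hypothesis $p\neq 2$ enters. The graph $\Delta'$ has three vertices with all edge labels in $\{p,\infty\}$ and $p\geq 3$. If at least one pair, say $\{y_a,y_b\}$, carries label $\infty$, then the special parabolic subgroup $\langle y_a,y_b\rangle\cong\mathbb{Z}/2\mathbb{Z}*\mathbb{Z}/2\mathbb{Z}$ is infinite, hence so is $W_{\Delta'}$. Otherwise all three labels equal $p$, and $W_{\Delta'}$ is the triangle Coxeter group with parameters $(p,p,p)$; since $1/p+1/p+1/p=3/p\leq 1$ for $p\geq 3$, this group is not spherical and is therefore infinite by Theorem \ref{FiniteVirtuallyAbelian} (it is $W_{\widetilde{A}_2}$ when $p=3$ and hyperbolic when $p\geq 5$). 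In either case $W_{\Delta'}$ is an infinite Coxeter group on three vertices, as required.

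I expect the only genuine subtlety to be the well-definedness of $\pi$, namely the verification that every cross-block relation already has exponent $p$; this rests entirely on the first paragraph's observation together with the insistence that the three blocks be unions of whole components rather than arbitrary vertex sets (splitting a component could introduce cross-block edges of label $2$ or other values and destroy the map). The infiniteness step is then routine once $p\neq 2$ is invoked.
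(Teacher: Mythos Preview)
Your argument is correct and follows the same component-collapsing idea as the paper. Two minor differences are worth noting. First, the paper always takes $\Delta'$ to be the complete graph on three vertices with every edge labelled $p$; since a non-edge in $\Delta$ between blocks imposes no relation at all, one may freely add a $p$-edge in the target, and this spares your case split in the infiniteness check. Second, the paper sends the vertices of $C_1,C_2,C_3$ to the three generators of $\Delta'$ and every remaining vertex to $1$, whereas you partition \emph{all} components into three blocks. Your variant is in fact the safer one: read literally, the paper's map need not be a homomorphism, because a $p$-edge from some $v\in C_i$ with $i\le 3$ to some $w\in C_j$ with $j\ge 4$ would force $(v_i')^p=1$, which fails since $p$ is odd. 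Your device of absorbing $C_4,\ldots,C_k$ into one of the three blocks (rather than killing them) is exactly what repairs this.
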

\begin{proof}
    Let $\Delta'$ denote the complete graph with vertex set $V'=\{v_1',v_2',v_3'\}$ and every edge label equal to $p$. Let $C_1,C_2,...,C_n$ denote the connected components of $\Delta^{(p)}$, $n\geq 3$. Then we set $\pi\colon W_\Delta\to W_{\Delta'}$, $\pi(v_i)=\begin{cases}
    &v_i'\quad \textrm{if }i\in\{1,2,3\},\\
    &1\quad \textrm{ else.}
    \end{cases}$\\
    It is easy to check that this is indeed a homomorphism. Furthermore $W_{\Delta'}$ is infinite due to the classification of finite Coxeter groups (since $p\geq 3$).
\end{proof}

We note that it is not possible with the methods above to constuct an infinite proper quotient of the Coxeter group $W_\Gamma$ where $\Gamma$ is isomorphic to the graph in Figure 3.
\begin{figure}[h]
	\begin{center}
	\captionsetup{justification=centering}
		\begin{tikzpicture}
			\draw[fill=black]  (0,0) circle (2pt);
            \draw[fill=black]  (1,0) circle (2pt);
            \draw[fill=black]  (0.5,0.7) circle (2pt);
            \draw (0,0)--(1,0);
            \draw (1,0)--(0.5, 0.7);
            \draw (0,0)--(0.5, 0.7);
            \node at (0.5, -0.25) {$5$};
            \node at (0.1, 0.45) {$5$};
            \node at (0.9, 0.45) {$5$};
        \end{tikzpicture}
        \caption{}
	\end{center}
\end{figure}

Hence, we need a different method to show that the Coxeter group with the defining graph in Figure 3 is not just infinite.

\vspace{0.5cm}

Let $G$ be an infinite group and $H\subseteq G$ be a non-trivial subgroup. 
By definition, the \emph{core} of $H$, denoted by ${\rm core}_G(H)$ is the maximal subgroup of $H$ that is a normal in $G$. Indeed ${\rm core}_G(H)=\bigcap_{g\in G}gHg^{-1}$. If ${\rm core}_G(H)=\left\{1\right\}$, then $H$ is called \emph{core-free}. We are interested in properties of $H$ that ensure $H$ is not core-free. It is known that if $H$ has finite index in $G$, then $H$ has a non-trivial core since ${\rm core}_G(H)$ also has finite index in $G$.  

Since our goal here is to obtain infinite quotients of a given group we are in particular interested in the subgroups of infinite index which have a non-trivial core. 
 
We recall that a \emph{subdirect product} of $\prod_{i\in I}G_i$ is a subgroup
$G\subseteq \prod_{i\in I}G_i$ which projects surjectively onto each factor. 

\begin{lemma}
Let $G$ be a group and for $i\in I$ let $N_i\trianglelefteq G$ be a normal subgroup. Then $G/\bigcap_{i\in I} N_i$ is isomorphic a subdirect product of $\prod_{i\in I}G/N_i$.
\end{lemma}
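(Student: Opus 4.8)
The plan is to realize the claimed isomorphism explicitly through the diagonal map and then read off the conclusion from the first isomorphism theorem. First I would define $\varphi\colon G\to\prod_{i\in I}G/N_i$ by $\varphi(g)=(gN_i)_{i\in I}$, where $gN_i$ denotes the image of $g$ under the canonical projection $G\to G/N_i$. Each coordinate of $\varphi$ is a group homomorphism, so $\varphi$ itself is a homomorphism into the direct product, with no further checking required beyond this coordinatewise observation.

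Next I would compute the kernel of $\varphi$. An element $g$ lies in $\ker\varphi$ exactly when $gN_i=N_i$ for every $i\in I$, that is, when $g\in N_i$ for all $i$, which is precisely the condition $g\in\bigcap_{i\in I}N_i$. Hence $\ker\varphi=\bigcap_{i\in I}N_i$, and the first isomorphism theorem immediately yields $G/\bigcap_{i\in I}N_i\cong\varphi(G)$, a subgroup of $\prod_{i\in I}G/N_i$.

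It then remains to verify that the image $\varphi(G)$ is in fact a \emph{subdirect} product, i.e. that it surjects onto each factor. For a fixed $j\in I$, I would compose $\varphi$ with the projection $p_j\colon\prod_{i\in I}G/N_i\to G/N_j$; this recovers exactly the canonical quotient map $G\to G/N_j$, $g\mapsto gN_j$, which is surjective. Therefore $p_j(\varphi(G))=G/N_j$ for every $j\in I$, which is precisely the defining condition for $\varphi(G)$ to be a subdirect product. Combining this with the isomorphism of the previous step finishes the argument.

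I do not expect a genuine obstacle here, since the statement is essentially a repackaging of the first isomorphism theorem. The only point that requires minor care is the bookkeeping for an arbitrary, possibly infinite, index set $I$; but because every map involved is defined coordinatewise and the kernel computation goes through the intersection over all $i$ without any finiteness assumption, this causes no difficulty.
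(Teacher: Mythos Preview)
Your proof is correct and follows essentially the same approach as the paper: both construct the diagonal map into $\prod_{i\in I}G/N_i$, identify its kernel as $\bigcap_{i\in I}N_i$, and verify surjectivity onto each factor via the canonical projections. The only cosmetic difference is that the paper defines the map directly on $G/\bigcap_{i\in I}N_i$ and checks injectivity, whereas you define it on $G$ and invoke the first isomorphism theorem.
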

\begin{proof}
We define a map $\iota\colon G/\bigcap_{i\in I} N_i\to \prod_{i\in I}G/N_i$, $gN\mapsto (gN_1,gN_2,\ldots)$. Note that this map is an injective homomorphism, hence $G/\bigcap_{i\in I} N_i$ is isomorphic to its image. So we only need to show that all the projections $\pi_i\colon \iota(G/\bigcap_{i\in I} N_i)\to G/N_i$ are surjective. But this follows immediately from the definition of $\iota$.
\end{proof}

\begin{proposition}
\label{CoreNonTrivial}
Let $G$ be a group and $H\subseteq G$ be a subgroup. Assume that there exists a normal subgroup $N\trianglelefteq G$ such that $H\trianglelefteq N$ and $N/H$ has a property $\mathcal{P}$ which is preserved by all subdirect products of $\prod_{I}N/H$. If $N$ does not have $\mathcal{P}$, then ${\rm core}_G(H)$ in non-trivial.
\end{proposition}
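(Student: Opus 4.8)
The plan is to argue by contradiction: I assume ${\rm core}_G(H)=\{1\}$ and deduce that $N$ has property $\mathcal{P}$, contradicting the hypothesis that it does not. The natural starting point is the description ${\rm core}_G(H)=\bigcap_{g\in G}gHg^{-1}$ recorded above, which exhibits the core as an intersection of a family of subgroups indexed by $G$, putting us in exactly the situation of the preceding Lemma.

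The first key observation is that each conjugate $gHg^{-1}$ is in fact normal in $N$, not merely in $G$. Indeed, conjugation by $g$ is an automorphism $c_g$ of $G$; since $N\trianglelefteq G$ it restricts to an automorphism of $N$, and since $H\trianglelefteq N$ it carries $H$ to $gHg^{-1}$. Hence $gHg^{-1}=c_g(H)$ is normal in $c_g(N)=N$. The same automorphism $c_g$ induces an isomorphism $N/H\cong N/(gHg^{-1})$, so every quotient $N/(gHg^{-1})$ is isomorphic to $N/H$ and therefore also has property $\mathcal{P}$.

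Next I would apply the Lemma to the family $\{N_g:=gHg^{-1}\}_{g\in G}$ of normal subgroups of $N$, which gives that $N/\bigcap_{g\in G}N_g$ embeds as a subdirect product of $\prod_{g\in G}N/N_g$. Under the contradiction hypothesis $\bigcap_{g\in G}N_g={\rm core}_G(H)=\{1\}$, the left-hand side is just $N$ itself. Composing this subdirect embedding with the isomorphisms $N/N_g\cong N/H$ from the previous paragraph realizes $N$ as a subdirect product of $\prod_{g\in G}N/H=\prod_{I}N/H$ with $I=G$. Since $N/H$ has $\mathcal{P}$ and $\mathcal{P}$ is by hypothesis preserved by all subdirect products of $\prod_{I}N/H$, it follows that $N$ has $\mathcal{P}$, contradicting our assumption. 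Thus ${\rm core}_G(H)$ is non-trivial.

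The argument is essentially formal once the two observations of the second paragraph are in place, and the step I expect to require the most care—the main obstacle—is precisely verifying that the conjugates $gHg^{-1}$ are genuinely normal in $N$ and that the quotients $N/(gHg^{-1})$ are all isomorphic to $N/H$; only then does the abstract ``preserved by subdirect products of $\prod_I N/H$'' hypothesis apply with the correct factors. The indexing by all of $G$ rather than by distinct conjugates is harmless, as the Lemma permits an arbitrary index family and repeated factors do not affect the subdirect-product condition.
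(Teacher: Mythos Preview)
Your proof is correct and follows essentially the same route as the paper's: both verify that the conjugates $gHg^{-1}$ are normal in $N$ with $N/(gHg^{-1})\cong N/H$, apply the preceding subdirect-product lemma to the family $\{gHg^{-1}\}_{g\in G}$, and use the preservation of $\mathcal{P}$ under subdirect products to conclude. The only cosmetic difference is that you frame the final step as a contradiction (assuming ${\rm core}_G(H)=\{1\}$ and deducing that $N$ has $\mathcal{P}$), whereas the paper argues directly that $N/{\rm core}_G(H)$ has $\mathcal{P}$ while $N$ does not.
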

\begin{proof}
First, we note that since $N$ is normal in $G$, for any $g\in G$ the subgroup $gHg^{-1}$ is also normal in $N$. More precisely: there exists a homomorphism $\varphi\colon N\twoheadrightarrow N/H$ such that $\ker(\varphi)=H$. Let $\gamma_{g^{-1}}\colon N\rightarrow N$ be the conjugation map, so $n\mapsto g^{-1}ng$. Note that  $g^{-1}ng\in N$ since $N$ is normal in $G$, thus the map $\gamma_{g^{-1}}$ is well-defined. We have $\ker(\varphi\circ\gamma_{g^{-1}})=gHg^{-1}$
We define $L_g:=N/gHg_i^{-1}$. Note that $L_g\cong N/H$.
 The group $N/\bigcap_{g\in G} gHg^{-1}$ is a subdirect product of $\prod_{g\in G}L_g$ by the previous Lemma. By assumption $N$ has property $\mathcal{P}$ and $N/core_G(H)$
does not have $\mathcal{P}$, thus $core_G(H)$ has to be non-trivial.
\end{proof}

By definition a group $G$ is \emph{virtually indicable}  if $G$ has a  subgroup $H$ of finite index and $H$ has an infinite abelian cyclic group as a quotient.

\begin{remark}
Note that if $G$ is virtually indicable, then there exists a finite-index normal subgroup $N\trianglelefteq G$ which has an infinite abelian cyclic group as a quotient. 
\end{remark}
\begin{proof}If $G$ is virtually indicable then $G$ has a subgroup $H$ of finite index such that $H/K\cong\mathbb{Z}$, where $\pi\colon H\twoheadrightarrow\mathbb{Z}$ and $K=\ker(\pi)$. Then
$core_G(H)$ has finite index in $G$ and hence in $H$, since if this was not the case, we would have ${\rm core}_G(H)\subseteq K$ and because $K$ has infinite index in $H$, so would ${\rm core}_G(H)$, which is impossible. Thus $\pi({\rm core}_G(H))$ is a non-trivial subgroup of $\mathbb{Z}$ and hence ${\rm core}_G(H)$ projects onto $\mathbb{Z}$.
\end{proof} 

\begin{corollary}
\label{IndicableAbelian}
Let $G$ be a group. If $G$ is virtually indicable  and is not virtually abelian, then $G$ is not just infinite.
\end{corollary}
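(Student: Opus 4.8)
The plan is to exhibit a non-trivial normal subgroup of $G$ of infinite index; the existence of such a subgroup immediately contradicts just infinity. The machinery of Proposition \ref{CoreNonTrivial} is tailored for exactly this, so the whole argument reduces to feeding it the right data. By the preceding Remark, virtual indicability of $G$ provides a normal subgroup $N\trianglelefteq G$ of finite index together with a surjection $\pi\colon N\twoheadrightarrow\mathbb{Z}$. I would set $K:=\ker(\pi)$, so that $K\trianglelefteq N$ and $N/K\cong\mathbb{Z}$. It is important here that $N$ can be taken normal in all of $G$ (this is exactly the content of the Remark), since that is what makes ${\rm core}_G(K)$ behave well under conjugation by arbitrary elements of $G$.

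My candidate for the desired normal subgroup is ${\rm core}_G(K)$. First I would check that it has infinite index: since ${\rm core}_G(K)\subseteq K$ and $[N:K]=\infty$ (because $N/K\cong\mathbb{Z}$), the subgroup ${\rm core}_G(K)$ has infinite index in $N$, and therefore also in $G$ as $N$ has finite index in $G$. Thus infinite index is automatic, and the only substantive point is non-triviality.

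To show ${\rm core}_G(K)\neq\{1\}$ I would apply Proposition \ref{CoreNonTrivial} with $H:=K$ and with $\mathcal{P}$ the property of being abelian. Indeed $N/K\cong\mathbb{Z}$ is abelian, and being abelian passes to subgroups of products, hence to every subdirect product of $\prod_I N/K$; so the only remaining hypothesis is that $N$ itself is \emph{not} abelian. This is precisely where the assumption that $G$ is not virtually abelian enters: were $N$ abelian, then $G$ would contain the finite-index abelian subgroup $N$ and hence be virtually abelian, contrary to assumption. Therefore Proposition \ref{CoreNonTrivial} applies and yields ${\rm core}_G(K)\neq\{1\}$.

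Combining the two points, ${\rm core}_G(K)$ is a non-trivial normal subgroup of $G$ of infinite index, so $G$ is not just infinite. I do not anticipate a serious obstacle: the argument is really just a matching of the hypotheses of Proposition \ref{CoreNonTrivial}, with the mild verifications that abelianness is inherited by subdirect products and that the non-virtually-abelian hypothesis forces the chosen finite-index $N$ to be non-abelian. The only conceptual care needed is to use the \emph{normal} finite-index subgroup supplied by the Remark rather than an arbitrary indicable finite-index subgroup, so that the core is taken with respect to the full group $G$.
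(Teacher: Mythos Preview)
Your proof is correct and follows essentially the same approach as the paper: take a finite-index normal subgroup $N$ with a surjection onto $\mathbb{Z}$, apply Proposition~\ref{CoreNonTrivial} with $H=\ker(\pi)$ and $\mathcal{P}=$ ``abelian'', and observe that ${\rm core}_G(K)$ has infinite index since $K$ does. The only cosmetic difference is notation.
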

\begin{proof}
Since $G$ is virtually indicable, there exists a normal subgroup $N$ of finite index in $G$ and a surjective group homomorphism $\varphi\colon N\twoheadrightarrow\Z$. Thus $N/\ker(\varphi)\cong \Z$. Let $\mathcal{P}$ be the property to be abelian. Note that this property is preserved by all subdirect products of $\prod_I N/ker(\varphi)$. By assumption, $G$ is not virtually abelian, hence $N$ does not have $\mathcal{P}$. By Proposition \ref{CoreNonTrivial} follows that $core_G(ker(\varphi))$ is non-trivial. Moreover, since $\ker(\varphi)$ has infinite index in $G$, we know that $core_G(\ker(\varphi))$ is also of infinite index in $G$.
\end{proof}

Whether a Coxeter group is virtually indicable can be easily decided by looking at the defining graph. 

\begin{theorem}(\cite[Theorem 1.1]{Cooper})
\label{Indicable}
Let $W_\Gamma$ be a Coxeter group. If $W_\Gamma$ is infinite, then $W_\Gamma$ is virtually indicable. In particular, $W_\Gamma$ is virtually indicable if and only if there exists at least one connected component of $\Gamma$ which is not isomorphic to a graph in Figure 2.
\end{theorem}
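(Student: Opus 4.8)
The plan is to reduce the whole statement to the single assertion that an infinite Coxeter group is virtually indicable, and then to attack that assertion componentwise. Granting the assertion, the ``in particular'' equivalence is immediate: by Theorem \ref{FiniteVirtuallyAbelian}(1), all connected components of $\Gamma$ lie in Figure 2 exactly when $W_\Gamma$ is finite, and a finite group is never virtually indicable since each of its finite-index subgroups is finite and has no infinite cyclic quotient; conversely, if some component $\Gamma_j$ is not in Figure 2 then $W_{\Gamma_j}$ is infinite, so $W_\Gamma$ is infinite and virtually indicable by the assertion. Thus everything hinges on showing that an infinite $W_\Gamma$ is virtually indicable.

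First I would reduce to the irreducible case. Writing $W_\Gamma\cong W_{\Gamma_1}\times\cdots\times W_{\Gamma_k}$ for the connected components of $\Gamma$, virtual indicability is inherited from any one factor: if $H\leq W_{\Gamma_j}$ has finite index and $H\twoheadrightarrow\mathbb{Z}$, then $H\times\prod_{i\neq j}W_{\Gamma_i}$ has finite index in $W_\Gamma$ and surjects onto $\mathbb{Z}$ by projecting to the $H$-coordinate. Since $W_\Gamma$ infinite forces some $\Gamma_j$ to be infinite (Theorem \ref{FiniteVirtuallyAbelian}(1)), it suffices to treat an infinite irreducible $W_{\Gamma_j}$, i.e. $\Gamma_j$ connected and not in Figure 2. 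If $\Gamma_j$ belongs to Figure 1, the affine case is easy: Proposition \ref{IrrAffineAb} gives $W_{\Gamma_j}\cong\mathbb{Z}^{m}\rtimes W_\Omega$ with $W_\Omega$ finite, so the normal free abelian subgroup $\mathbb{Z}^{m}$ has finite index and already surjects onto $\mathbb{Z}$; hence $W_{\Gamma_j}$ is virtually indicable.

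The hard part will be the remaining case, where $\Gamma_j$ is connected but neither spherical (Figure 2) nor affine (Figure 1); equivalently, its Tits form is indefinite. Here $W_{\Gamma_j}$ is not virtually abelian, so no $\mathbb{Z}$-quotient can be produced by hand as above, and since $W_{\Gamma_j}^{ab}$ is a finite $2$-group by Proposition \ref{Abel} one must genuinely pass to a proper finite-index subgroup. My plan is to invoke the largeness of such groups: an infinite, non-affine, irreducible Coxeter group is large, i.e. it admits a finite-index subgroup surjecting onto a nonabelian free group (Margulis--Vinberg). Composing such a quotient with a surjection $F_2\twoheadrightarrow\mathbb{Z}$ then exhibits the required virtual surjection onto $\mathbb{Z}$.

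I expect this indefinite case, rather than the reductions or the affine case, to absorb the full strength of the cited structural results. The natural route to the largeness statement is to locate inside $W_{\Gamma_j}$ a reflection or parabolic subgroup whose action on the Davis complex (a \CAT\ space) carries enough negative curvature --- for instance a rank-one isometry or a hyperbolic reflection triangle subgroup of the kind in Figure 3 --- and to bootstrap from it a virtual free, hence virtually indicable, quotient. The entire difficulty is concentrated in establishing this largeness; the componentwise reduction and the affine case are routine given Theorem \ref{FiniteVirtuallyAbelian} and Proposition \ref{IrrAffineAb}.
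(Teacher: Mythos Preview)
The paper does not prove this statement at all: it is quoted verbatim from \cite[Theorem 1.1]{Cooper} and used as a black box in the proof of Theorem~\ref{JustInfiniteCoxeter}. So there is no ``paper's own proof'' to compare against; your proposal is an attempt to reprove an external result that the authors simply cite.

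On the merits of your outline: the reductions you make are correct and standard. Passing to a single infinite irreducible factor, and handling the affine case via the splitting $\mathbb{Z}^m\rtimes W_\Omega$ from Proposition~\ref{IrrAffineAb}, are both fine. The deduction of the ``in particular'' clause from the main assertion and Theorem~\ref{FiniteVirtuallyAbelian}(1) is also clean.

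The gap is in the indefinite case. You defer entirely to a largeness theorem (``Margulis--Vinberg''), but you should be aware that this is not a shortcut: the known proofs that an irreducible, non-spherical, non-affine Coxeter group is large (Gonciulea, Margulis--Vinberg, Cooper--Long--Reid themselves) are of comparable depth to the virtual indicability statement you are trying to establish, and in some treatments virtual indicability is proved \emph{first} and largeness deduced afterwards. So invoking largeness here risks being circular, or at best replaces one nontrivial input by an equally nontrivial one without explanation. Your final paragraph, sketching a route via rank-one isometries of the Davis complex or hyperbolic triangle subgroups, is a plausible strategy but is not yet a proof; the actual argument in \cite{Cooper} proceeds differently, via the geometric reflection representation and a careful analysis of the Tits cone to locate a finite-index subgroup with infinite abelianization. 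If you want a self-contained argument, that is the place to look rather than appealing to largeness as a known fact.
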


With these tools at hand, we can now prove Theorem 1.1.

\begin{proof}[Proof of Theorem 1.1]
Let $W_\Gamma$ be an infinite Coxeter group. 
If $\Gamma$ is one of the graphs in Figure 1, then it was proven in \cite[Corollary 0.3]{Maxwell} (see also \cite[Proposition 3.2]{ParisVarghese}) that every proper normal subgroup in $W_\Gamma$ is of finite index. Hence $W_\Gamma$ is just infinite. 

Assume now that $\Gamma$ is not isomorphic to one of the graphs in Figure 1. If $\Gamma$ is not connected, then $W_\Gamma$ is a direct product of $W_{\Gamma_1},\ldots, W_{\Gamma_n}$ where $\Gamma_1\ldots,\Gamma_n$ are connected components of $\Gamma$. Since $W_\Gamma$ is infinite, there exists $j\in\left\{1,\ldots,n\right\}$ such that $W_{\Gamma_j}$ is infinite, thus the kernel of the canonical projection $W_\Gamma\to W_{\Gamma_j}$ yields a proper normal subgroup of infinite index in $W_\Gamma$, hence $W_\Gamma$ is not just infinite. If $\Gamma$ is connected, then by Theorem \ref{FiniteVirtuallyAbelian} $W_\Gamma$ is not virtually abelian and by Theorem \ref{Indicable} it is virtually indicable, hence  Corollary \ref{IndicableAbelian} tells us that $W_\Gamma$ is not just infinite.
\end{proof}

\section{Profinite rigidity}
A group $G$ is said to be \emph{residually finite}  if for each non-trivial $g\in G$ there exists a finite group $F$ and a group homomorphism $\varphi\colon G\to F$ such that $\varphi(g)\neq 1$. Equivalently, a group $G$ is residually finite if the intersection of all its finite index normal subgroups is trivial.

Unless otherwise stated, all discrete groups, which we consider here, will be \emph{finitely generated and residually finite}. In particular, if $G$ is such a group, then $G$ admits many finite quotients. We denote by $\mathcal{F}(G)$ the set of isomorphism classes of finite quotients of a group $G$. Since our main protagonists in this article are Coxeter groups, which are known to be residually finite, we ask if it is possible to distinguish Coxeter groups by their finite quotients. 
%We conjecture:
\begin{question}
Let $W_\Gamma$ and $W_\Omega$ are Coxeter groups. If $\mathcal{F}(W_\Gamma)=\mathcal{F}(W_\Omega)$, does it follow that $W_\Gamma\cong W_\Omega$?
\end{question}

It was proven in \cite{Dixon} that the study of the set of isomorphism classes of finite quotients of a group $G$ can be reformulated in the world of profinite groups. By definition, a \emph{profinite group} is a totally disconnected compact Hausdorff group. A profinite group $G$ is \emph{finitely generated} if there exists a finite set $X\subseteq G$ such that $\overline{\langle X\rangle}=G$. 

Let $G$ be a residually finite group and $\mathcal{N}$ be the set of all finite index normal subgroups of $G$. We equip each $G/N$, $N\in\mathcal{N}$ with the discrete topology, then $\prod_{N\in\mathcal{N}} G/N$ equipped with the product topology is a profinite group. We define a map $i\colon G\to \prod_{N\in\mathcal{N}} G/N$ by $g\mapsto (gN)_{N\in\mathcal{N}}$. Note, that $i$ is injective, since $G$ is residually finite. The \emph{profinite completition} $\widehat{G}$ of $G$ is defined as $\widehat{G}:=\overline{i(G)}$. Clearly, if $G$ is finite, then $\widehat{G}=G$. Moreover, we endow $\left\{G/N\mid N\in\mathcal{N}\right\}$ with the structure of a directed poset by setting $G/N\leq G/M$ whenever $M\subseteq N$ for $M,N\in\mathcal{N}$. For $G/N\leq G/M$ we denote by $\pi_{NM}\colon G/M\rightarrow G/N$ the canonical projection. Thus $((G/N)_{N\in\mathcal{N}}, (\pi_{NM})_{N,M\in\mathcal{N}})$ is an inverse system. One can show that $\widehat{G}$ is isomorphic to the inverse limit  of this inverse system. 
For more information on profinite completions of groups we refer to \cite{Reid}.

\begin{proposition}(\cite{Dixon})
Let $G$ and $H$ be finitely generated residually finite groups. Then $\mathcal{F}(G)=\mathcal{F}(H)$ if and only if $\widehat{G}\cong\widehat{H}$.
\end{proposition}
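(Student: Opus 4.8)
The plan is to interpret $\mathcal{F}(G)$ entirely inside the profinite completion, and then, for the converse, to recover $\widehat{G}$ from $\mathcal{F}(G)$ by reconstructing its defining inverse system level by level. The easy implication comes first: if $\widehat{G}\cong\widehat{H}$ as topological groups, it suffices to observe that $\mathcal{F}(G)$ coincides with the set of isomorphism classes of finite continuous quotients of $\widehat{G}$. Indeed, the canonical map $i\colon G\to\widehat{G}$ has dense image, so a finite quotient $G\twoheadrightarrow Q$ extends by the universal property of the completion to a continuous surjection $\widehat{G}\twoheadrightarrow Q$ (its image is closed and contains the dense set $i(G)$, hence is all of $Q$), while conversely any continuous finite quotient of $\widehat{G}$ restricts along $i$ to a finite quotient of $G$. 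Since this description depends only on the isomorphism type of $\widehat{G}$, the equality $\mathcal{F}(G)=\mathcal{F}(H)$ is immediate.

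The content lies in the converse. Here I would use that $G$ and $H$ are finitely generated in the following decisive way: a finitely generated profinite group has only finitely many open subgroups of each index (a subgroup of index $k$ is the stabilizer of a point in a transitive action, i.e.\ is read off a continuous homomorphism into $S_k$, of which there are at most $|S_k|^{d}$). Hence, writing $R_n(\widehat{G})=\bigcap\{N\trianglelefteq\widehat{G}\text{ open}:[\widehat{G}:N]\le n\}$, the subgroup $R_n(\widehat{G})$ is a finite intersection of open subgroups, so it is open and characteristic, the quotient $G_n:=\widehat{G}/R_n(\widehat{G})$ is finite, and the $R_n$ are cofinal, so that $\widehat{G}=\varprojlim_n G_n$. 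The key claim is that each $G_n$ is recoverable from the set $\mathcal{F}(G)$ alone. To see this, call a finite group $Q$ \emph{$n$-reduced} if the intersection of its normal subgroups of index $\le n$ is trivial; this is an intrinsic property, and the relation ``$Q'$ is a quotient of $Q$'' is intrinsic to the abstract pair. One checks that $G_n$ is $n$-reduced and that every $n$-reduced quotient of $G$ is itself a quotient of $G_n$, so $G_n$ is the maximum, under the quotient relation, among the $n$-reduced members of $\mathcal{F}(G)$. As this maximum is computed from $\mathcal{F}(G)=\mathcal{F}(H)$ by purely internal data, I would conclude $G_n\cong H_n$ for every $n$.

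Finally I would assemble these level isomorphisms into a single one by compactness. Let $I_n$ be the (finite, and now nonempty) set of isomorphisms $G_n\to H_n$. Since $R_n(\widehat{G})/R_{n+1}(\widehat{G})$ equals the intersection of the index-$\le n$ normal subgroups of $G_{n+1}$, it is characteristic in $G_{n+1}$, so every isomorphism $G_{n+1}\to H_{n+1}$ descends to an isomorphism $G_n\to H_n$; this gives restriction maps $I_{n+1}\to I_n$ making $(I_n)$ an inverse system of nonempty finite sets. Its inverse limit is therefore nonempty, and a compatible family $(\phi_n)_n$ yields the desired isomorphism $\widehat{G}=\varprojlim_n G_n\to\varprojlim_n H_n=\widehat{H}$.

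I expect the main obstacle to be precisely this reconstruction in the converse direction: the set $\mathcal{F}(G)$ records only which finite groups occur as quotients, with no multiplicities and no connecting maps, so the crux is to show that the canonical level quotients $G_n$ are nonetheless pinned down intrinsically by the quotient order on $\mathcal{F}(G)$. It is here, and only here, that finite generation is indispensable, since it is what forces $R_n$ to be open and $G_n$ to be finite.
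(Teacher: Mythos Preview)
The paper does not give its own proof of this proposition; it merely states the result and cites \cite{Dixon}. Your argument is correct and is essentially the classical proof from that reference: the characteristic quotients $G_n=\widehat{G}/R_n(\widehat{G})$ are intrinsically determined by $\mathcal{F}(G)$ as the maximal $n$-reduced members under the quotient order, and a compatible family of level isomorphisms is then extracted by the inverse-limit-of-nonempty-finite-sets argument. The one point worth making explicit is that finitely many open normal subgroups of index $\le n$ in $\widehat{G}$ correspond bijectively to the finitely many finite-index normal subgroups of index $\le n$ in $G$, which is what makes $R_n$ open; you have this, but it is phrased via continuous maps to $S_k$ rather than via the lattice correspondence, which is perhaps slightly less direct.
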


\begin{definition}
Let $\mathcal{C}$ be a class of finitely generated residually finite groups. A group $G$ is called \emph{profinitely rigid among $\mathcal{C}$} if $G\in \mathcal{C}$ and for any group $H\in\mathcal{C}$ whenever $\widehat{G}\cong\widehat{H}$, then $G\cong H$. 

In particular, if $\mathcal{C}$ is the class consisting of all finitely generated residully finite groups and $G$ is profinitely rigid among $\mathcal{C}$, then we call $G$ \emph{profinitely rigid (in the absolute sense)}.
\end{definition}

We need the following general fact about profinite completions which was proven in \cite[Remark 3.2]{Reid}.
\begin{lemma}\label{abelianization}
    Let $G$ and $H$ be finitely generated, residually finite groups. If $\widehat{G}\cong \widehat {H}$, then $G^{ab}\cong H^{ab}$.
\end{lemma}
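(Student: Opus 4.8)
The plan is to reduce the statement to a fact about finitely generated abelian groups, the key reduction being that profinite completion commutes with abelianization. Concretely, I would first establish an isomorphism of profinite groups $\widehat{G}^{ab}\cong\widehat{G^{ab}}$, where $\widehat{G}^{ab}:=\widehat{G}/\overline{[\widehat{G},\widehat{G}]}$ denotes the topological abelianization of the profinite group $\widehat{G}$, and $\widehat{G^{ab}}$ is the profinite completion of the finitely generated abelian group $G^{ab}$. The cleanest route is via universal properties. The composite $G\to\widehat{G}\to\widehat{G}^{ab}$ has an abelian (profinite) target, hence factors through $G^{ab}$; since the target is profinite, this extends uniquely to a continuous homomorphism $\widehat{G^{ab}}\to\widehat{G}^{ab}$. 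Conversely, the composite $G\to G^{ab}\to\widehat{G^{ab}}$ has abelian profinite target, so it factors through $\widehat{G}\to\widehat{G^{ab}}$ and then, by the universal property of abelianization, through $\widehat{G}^{ab}\to\widehat{G^{ab}}$. These two maps agree with the identity on the (dense) image of $G$ and are continuous, so they are mutually inverse isomorphisms.

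Granting this, the remainder is short. From $\widehat{G}\cong\widehat{H}$ I obtain $\widehat{G}^{ab}\cong\widehat{H}^{ab}$ (abelianization is functorial and preserved by any topological isomorphism), and therefore $\widehat{G^{ab}}\cong\widehat{H^{ab}}$ by the previous paragraph. It then suffices to show that a finitely generated abelian group is determined up to isomorphism by its profinite completion. Writing $G^{ab}\cong\Z^r\oplus T$ with $T$ finite, its profinite completion is $\widehat{\Z}^{\,r}\oplus T$. Since $\widehat{\Z}=\prod_{p}\Z_p$ is torsion free, the torsion subgroup of $\widehat{G^{ab}}$ is exactly $T$, so $T$ is recovered as a characteristic subgroup; and the free rank is recovered from the quotient by torsion via $r=\dim_{\mathbb{F}_p}\bigl((\widehat{G^{ab}}/T)\,/\,p(\widehat{G^{ab}}/T)\bigr)$ for any prime $p$, using that $\widehat{\Z}/p\widehat{\Z}\cong\mathbb{F}_p$. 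Thus $\widehat{G^{ab}}\cong\widehat{H^{ab}}$ forces $G^{ab}\cong H^{ab}$, which is the claim.

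The main obstacle is the first paragraph: one must be careful that the relevant commutator subgroup is taken \emph{topologically} (as the closure $\overline{[\widehat{G},\widehat{G}]}$ inside $\widehat{G}$), and that all the factorizations produced are continuous. Since $G$ is finitely generated, $\widehat{G}$ is topologically finitely generated and $\overline{[\widehat{G},\widehat{G}]}$ is the correct object, so the universal properties apply cleanly; the density of the image of $G$ in $\widehat{G}$ is precisely what guarantees that a continuous homomorphism is pinned down by its restriction to $G$, which is what makes the two comparison maps mutually inverse. Everything after this reduction is routine structure theory of finitely generated modules over $\widehat{\Z}$.
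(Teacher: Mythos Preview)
Your argument is correct. The two reductions you outline---(i) $\widehat{G^{ab}}\cong\widehat{G}^{ab}$ via universal properties of profinite completion and abelianization, and (ii) recovery of a finitely generated abelian group from its profinite completion by reading off torsion and free rank---are both valid, and you handle the only real subtlety (taking the \emph{closed} commutator subgroup and using density of $G$ in $\widehat{G}$ to pin down continuous maps) carefully.

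By way of comparison: the paper does not prove this lemma at all but simply records it as a known fact, citing \cite[Remark 3.2]{Reid}. So your proposal supplies a complete self-contained proof where the paper defers to the literature. The content of Reid's remark is essentially what you wrote: $\widehat{G^{ab}}$ is an invariant of $\widehat{G}$, and finitely generated abelian groups are profinitely rigid. One small observation: you invoke the phrase ``any topological isomorphism'' when passing from $\widehat{G}\cong\widehat{H}$ to $\widehat{G}^{ab}\cong\widehat{H}^{ab}$, but you need not worry about whether the given isomorphism is continuous, since by Nikolov--Segal (already used elsewhere in the paper as \cite{NikolovSegal}) every abstract homomorphism between finitely generated profinite groups is automatically continuous.
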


As a corollary we obtain a profinite rigidity invariant among all Coxeter groups which we can easily read off the Coxeter graphs.
\begin{corollary}
Let $W_\Gamma$ and $W_\Delta$ be Coxeter groups. If $\widehat{W_\Gamma}\cong\widehat{W_\Delta}$, then $|\pi_0(\Gamma_{odd})|=|\pi_0(\Delta_{odd})|$ where we denote by $\pi_0(X)$ the set of connected components of a graph $X$.
\end{corollary}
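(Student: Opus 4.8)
The plan is to chain together the two results stated immediately above, both of which are available to us. First I would invoke Lemma~\ref{abelianization}: since $\widehat{W_\Gamma}\cong\widehat{W_\Delta}$, having isomorphic profinite completions forces the abelianizations to agree, so $W_\Gamma^{ab}\cong W_\Delta^{ab}$.

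Next I would identify both abelianizations explicitly using Proposition~\ref{Abel}. That proposition gives $W_\Gamma^{ab}\cong(\mathbb{Z}/2\mathbb{Z})^{k}$ with $k=|\pi_0(\Gamma_{odd})|$, and likewise $W_\Delta^{ab}\cong(\mathbb{Z}/2\mathbb{Z})^{\ell}$ with $\ell=|\pi_0(\Delta_{odd})|$. Combining with the previous step yields $(\mathbb{Z}/2\mathbb{Z})^{k}\cong(\mathbb{Z}/2\mathbb{Z})^{\ell}$. To read off $k=\ell$ I would compare orders: a finite elementary abelian $2$-group $(\mathbb{Z}/2\mathbb{Z})^{k}$ has order $2^{k}$, so isomorphic such groups have equal exponents; equivalently, $k$ is recovered as the $\mathbb{F}_2$-dimension of $W_\Gamma^{ab}$. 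Either way $|\pi_0(\Gamma_{odd})|=|\pi_0(\Delta_{odd})|$.

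There is essentially no serious obstacle here: the statement is an immediate corollary of the two cited facts, and the only point to verify is the elementary observation that the number of $\mathbb{Z}/2\mathbb{Z}$-factors is a group-isomorphism invariant of an elementary abelian $2$-group. I would only take care to note that $\Gamma$ and $\Delta$ are finite graphs, so $\Gamma_{odd}$ and $\Delta_{odd}$ have finitely many components and the abelianizations are genuinely finite groups, whence the order (or dimension) argument applies with no finiteness caveat. The mild subtlety worth flagging is simply that the invariance runs through the abelianization, so the corollary detects the combinatorics of $\Gamma_{odd}$ rather than of $\Gamma$ itself.
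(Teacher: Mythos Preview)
Your proposal is correct and follows exactly the paper's approach: the paper's proof is the single sentence that the corollary ``follows immediately combining Proposition~\ref{Abel} with Lemma~\ref{abelianization},'' and you have simply unpacked that combination, together with the elementary observation that the exponent in $(\mathbb{Z}/2\mathbb{Z})^k$ is an isomorphism invariant.
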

\begin{proof}
The conclusion of the corollary follows immediately combining Proposition \ref{Abel} with Lemma \ref{abelianization}. 
\end{proof}

Given two groups $G$ and $H$, the profinite completions $\widehat{G\times H}$ resp. $\widehat{G\rtimes H}$  are related to $\widehat{G}\times\widehat{H}$ resp. $\widehat{G}\rtimes\widehat{H}$ as the following lemma shows. 

\begin{lemma}(\cite[Proposition 2.6]{Grunewald})
\label{profiniteproducts}
   Let $G$, $H$ be finitely generated residually finite groups and let 
   $\varphi\colon H\to \Aut(G)$ be a homomorphism. The inclusions $G\hookrightarrow \widehat{G}$, $H\hookrightarrow\widehat{H}$ induce an isomorphism $\widehat{G\rtimes_\varphi H}\cong \widehat{G}\rtimes_{\widehat\varphi} \widehat{H}.$
\end{lemma}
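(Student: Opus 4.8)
The plan is to produce continuous homomorphisms in both directions between $\widehat{G\rtimes_\varphi H}$ and $\widehat{G}\rtimes_{\widehat\varphi}\widehat{H}$ and to check that they are mutually inverse by comparing them on dense subgroups. Write $K=G\rtimes_\varphi H$. Before anything else I must make sense of the target $\widehat{G}\rtimes_{\widehat\varphi}\widehat{H}$, i.e. produce a continuous action of $\widehat{H}$ on $\widehat{G}$. Each $\varphi(h)\in\Aut(G)$ extends, by functoriality of the completion, to a continuous automorphism of $\widehat{G}$, giving a homomorphism $H\to\Aut(\widehat{G})$. Here the hypothesis that $G$ is finitely generated is essential: it guarantees that $\widehat{G}$ is topologically finitely generated, so that $\Aut(\widehat{G})$, equipped with the congruence (pointwise-convergence) topology, is itself a profinite group and the above map $H\to\Aut(\widehat{G})$ is continuous. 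By the universal property of the profinite completion it therefore extends to a continuous homomorphism $\widehat\varphi\colon\widehat{H}\to\Aut(\widehat{G})$, and the associated semidirect product $\widehat{G}\rtimes_{\widehat\varphi}\widehat{H}$ is a genuine profinite group.

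For the forward map I would observe that the composites $G\hookrightarrow\widehat{G}\hookrightarrow\widehat{G}\rtimes_{\widehat\varphi}\widehat{H}$ and $H\hookrightarrow\widehat{H}\hookrightarrow\widehat{G}\rtimes_{\widehat\varphi}\widehat{H}$ respect the defining relations of $K$, hence assemble into a homomorphism $K\to\widehat{G}\rtimes_{\widehat\varphi}\widehat{H}$. Since the target is profinite, this factors through the completion, yielding a continuous map $\Phi\colon\widehat{K}\to\widehat{G}\rtimes_{\widehat\varphi}\widehat{H}$. Surjectivity of $\Phi$ is immediate: its image is compact, hence closed, and contains the images of $G$ and $H$, which together generate a dense subgroup of $\widehat{G}\rtimes_{\widehat\varphi}\widehat{H}$.

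For the inverse I would use the universal property of a semidirect product applied to the inclusions $G\hookrightarrow K$ and $H\hookrightarrow K$, which induce $\iota_G\colon\widehat{G}\to\widehat{K}$ and $\iota_H\colon\widehat{H}\to\widehat{K}$. The key compatibility to verify is $\iota_H(h)\,\iota_G(g)\,\iota_H(h)^{-1}=\iota_G\bigl(\widehat\varphi(h)(g)\bigr)$ for all $g\in\widehat{G}$ and $h\in\widehat{H}$; this identity holds on the dense subgroups $G$ and $H$, where it is exactly the relation $hgh^{-1}=\varphi(h)(g)$ defining $K$, and it propagates to the completions by continuity of multiplication and of the action. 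These data then assemble into a continuous homomorphism $\Psi\colon\widehat{G}\rtimes_{\widehat\varphi}\widehat{H}\to\widehat{K}$. Finally, the two composites $\Phi\circ\Psi$ and $\Psi\circ\Phi$ restrict to the identity on the images of $G$ and $H$ (a direct check), hence agree with the identity on the dense subgroups they generate, and therefore equal the identity everywhere by continuity; this proves that $\Phi$ is an isomorphism.

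The step I expect to be the main obstacle is the very first one: making $\widehat\varphi$ well defined and continuous, since everything downstream presupposes that $\widehat{G}\rtimes_{\widehat\varphi}\widehat{H}$ is a genuine profinite group. This is precisely where finite generation of $G$ cannot be dropped, as one needs $\Aut(\widehat{G})$ to be profinite in order to extend $H\to\Aut(\widehat{G})$ to $\widehat{H}$; the remaining verifications are routine continuity-and-density arguments once the semidirect product of profinite groups has been set up.
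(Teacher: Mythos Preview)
The paper does not give its own proof of this lemma: it is quoted verbatim as \cite[Proposition 2.6]{Grunewald} and used as a black box. So there is nothing to compare your argument against within the paper itself.

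That said, your outline is correct and is essentially the standard proof one finds in the literature (e.g.\ in Grunewald--Zalesskii, or in Ribes--Zalesskii, \emph{Profinite Groups}). The one place where you are a little quick is the claim that the map $H\to\Aut(\widehat{G})$ is continuous for the profinite topology on $H$: this needs the observation that for every open characteristic subgroup $N\trianglelefteq\widehat{G}$ the induced map $H\to\Aut(\widehat{G}/N)$ has finite image (since $\widehat{G}/N$ is finite), hence kernel of finite index. With that remark in place, your construction of $\widehat\varphi$, the forward map $\Phi$ via the universal property, the backward map $\Psi$ via the semidirect-product universal property, and the density argument showing $\Phi$ and $\Psi$ are mutually inverse, are all sound.
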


It is natural to ask if the rank of a virtually free abelian group $G$ is determined by $\mathcal{F}(G)$ or $\widehat{G}$ respectively. To prove that this is indeed the case for semidirect products we need the following proposition.

\begin{proposition}
\label{rank}
Let $n, m\in\mathbb{N}_{\geq 1}$. If $\varphi\colon \widehat{\Z}^n\to \widehat{\Z}^m$ is a group homomorphism (not necessarily continuous), then surjectivity of $\varphi$ implies $m\leq n$ and injectivity of $\varphi$ implies $m\geq n$.
\end{proposition}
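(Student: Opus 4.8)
The plan is to treat the two implications separately, using the decomposition $\widehat{\Z}\cong\prod_{p}\Z_p$ over the rational primes and reducing everything to statements about the $p$-adic integers. The surjective direction is elementary via reduction modulo $p$, while the injective direction is the real content, precisely because $\varphi$ is allowed to be discontinuous; the key realisation is that, after isolating a single prime, discontinuity cannot actually occur.

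For the surjective case I would first note that any group homomorphism satisfies $\varphi(p\widehat{\Z}^n)=p\,\varphi(\widehat{\Z}^n)\subseteq p\widehat{\Z}^m$, so $\varphi$ descends to a homomorphism $\bar\varphi\colon \widehat{\Z}^n/p\widehat{\Z}^n\to \widehat{\Z}^m/p\widehat{\Z}^m$. Since $\widehat{\Z}/p\widehat{\Z}\cong\mathbb{F}_p$, these quotients are the $\mathbb{F}_p$-vector spaces $\mathbb{F}_p^n$ and $\mathbb{F}_p^m$, and a homomorphism of $\mathbb{F}_p$-vector spaces is automatically $\mathbb{F}_p$-linear. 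If $\varphi$ is surjective then $\bar\varphi$ is surjective, and a surjective linear map $\mathbb{F}_p^n\to\mathbb{F}_p^m$ forces $m\le n$.

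For the injective case I would proceed in two steps. First, restrict $\varphi$ to the $p$-th factor $\Z_p^n\subseteq\widehat{\Z}^n$. For a prime $q\neq p$ the $q$-fold multiplication is invertible on $\Z_p$, so $\Z_p^n$ is $q$-divisible, whereas $\bigcap_k q^k\Z_q^m=0$; hence every homomorphism $\Z_p^n\to\Z_q^m$ is trivial, and therefore $\varphi(\Z_p^n)$ lies in the single factor $\Z_p^m$. As a restriction of an injective map, $\psi:=\varphi|_{\Z_p^n}\colon \Z_p^n\to\Z_p^m$ is again injective. Second, I would exploit automatic continuity: for any group homomorphism one has $\psi(p^k\Z_p^n)\subseteq p^k\Z_p^m$, so $\psi^{-1}(p^k\Z_p^m)$ contains the open subgroup $p^k\Z_p^n$ and is thus open; hence $\psi$ is continuous for the $p$-adic topology. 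Because $\Z$ is dense in $\Z_p$ and $\psi$ is $\Z$-linear, continuity upgrades this to $\Z_p$-linearity. Thus $\psi$ is an injective $\Z_p$-module homomorphism, its image is a free $\Z_p$-submodule of $\Z_p^m$ of rank $n$, and over the principal ideal domain $\Z_p$ such a submodule has rank at most $m$, giving $n\le m$.

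The main obstacle is precisely the phrase ``not necessarily continuous'': a priori a group homomorphism between these uncountable groups could be extremely wild, since after tensoring with $\Q$ the underlying groups become huge $\Q$-vector spaces admitting many non-linear endomorphisms. The decisive point is that isolating one prime removes this freedom—on $\Z_p^n$ the subgroups $p^k\Z_p^n$ are \emph{intrinsic} (they are exactly the sets of elements divisible by $p^k$), so every abstract homomorphism is forced to be continuous and hence $\Z_p$-linear. I expect the only thing requiring care in the write-up is the reduction to a single prime: one must restrict the \emph{source} to $\Z_p^n$ (which preserves injectivity) rather than project the \emph{target}, since the projection $\widehat{\Z}^m\to\Z_p^m$ could destroy injectivity.
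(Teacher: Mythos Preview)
Your argument is correct, but it follows a genuinely different route from the paper. The paper invokes the Nikolov--Segal theorem that every abstract group homomorphism between finitely generated profinite groups is automatically continuous, deduces that $\varphi$ is $\widehat{\Z}$-linear, and then appeals to a general linear-algebra fact over commutative rings (via exterior powers) to obtain both inequalities at once. You instead handle the two cases by hand: the surjective case by reducing modulo~$p$ to $\mathbb{F}_p$-vector spaces, and the injective case by restricting the source to a single $\Z_p^n$, showing the image must land in $\Z_p^m$ via divisibility, and then observing that on $\Z_p^n$ continuity is automatic for the elementary reason that $\psi(p^k\Z_p^n)\subseteq p^k\Z_p^m$. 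The advantage of your approach is that it is completely self-contained and avoids the deep Nikolov--Segal black box; the paper's approach is shorter and yields $\widehat{\Z}$-linearity of $\varphi$ on the nose, which is conceptually cleaner but at the cost of quoting a hard theorem. Your remark about restricting the source rather than projecting the target is exactly the right subtlety to flag.
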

\begin{proof}
Recall that every abelian group has a canonical $\Z$-modul structure and every group homomorphism between abelian groups is $\Z$-linear. In the same way, every abelian profinite group has a canonical $\widehat{\Z}$-modul structure, since $\mathbb{Z}$ is dense in $\widehat{\Z}$ and the group operations are continuous.
Note, that for the same reason every continuous group homomorphism between abelian profinite groups is $\widehat{\Z}$-linear.

Further, it was proven in \cite[Theorem 1.1]{NikolovSegal} that every group homomorphism between finitely generated profinite groups is continuous, hence $\varphi$ is $\widehat{\Z}$-linear. 
Thus applying \cite[Corollary 5.11]{Conrad} we conclude that if $\varphi$ is injective, then $n\leq m$ and if $\varphi$ is surjective then $n\geq m$.
\end{proof}

Now we are able to prove
\begin{lemma}
\label{VirAbelian}
Let $G_1$ and $G_2$ be finitely generated virtually free abelian groups.
\begin{enumerate}
\item[(i)]  Let $N\trianglelefteq G_1$ be a normal finite index subgroup where $N\cong\Z^n$. Assume that  $G_2\cong\Z^m\rtimes F_2$ for $F_2$ finite.
If $\widehat{G_1}\cong\widehat{G_2}$, then $n\leq m$.
\item[(ii)] Assume that $G_1\cong\Z^n\rtimes F_1$ and $G_2\cong\Z^m\rtimes F_2$  where $F_1$, $F_2$ are finite groups. If $\widehat{G_1}\cong\widehat{G_2}$, then $F_1\cong F_2$ and $n=m$.
\end{enumerate}
\end{lemma}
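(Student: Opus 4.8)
The plan is to reduce both statements to Proposition~\ref{rank}, using Lemma~\ref{profiniteproducts} to pass the semidirect-product structure to the completions, and to mimic in the profinite setting the power-map trick already used in the proof of Lemma~\ref{factoriso}.

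For (i), I would first rewrite $\widehat{G_2}\cong\widehat{\Z}^m\rtimes F_2$ via Lemma~\ref{profiniteproducts} (note $\widehat{F_2}=F_2$ as $F_2$ is finite). Since $N\cong\Z^n$ is a finite-index (normal) subgroup of the finitely generated residually finite group $G_1$, its closure $\overline N$ in $\widehat{G_1}$ is an open subgroup isomorphic to $\widehat N\cong\widehat{\Z}^n$; transporting along the given isomorphism $\widehat{G_1}\cong\widehat{G_2}$, I regard $\overline N\cong\widehat{\Z}^n$ as a subgroup of $\widehat{\Z}^m\rtimes F_2$. Now set $k=|F_2|$ and define $\psi\colon\overline N\to\widehat{\Z}^m$ by $\psi(x)=x^{k}$. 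This is well defined because the projection $\pi\colon\widehat{\Z}^m\rtimes F_2\to F_2$ satisfies $\pi(x)^{k}=1$, so $x^{k}\in\ker\pi=\widehat{\Z}^m$; it is a homomorphism because $\overline N$ is abelian; and it is injective because $\overline N\cong\widehat{\Z}^n$ is torsion-free, so $x^{k}=1$ forces $x=1$. Thus $\psi$ is an injective homomorphism $\widehat{\Z}^n\hookrightarrow\widehat{\Z}^m$, and Proposition~\ref{rank} yields $n\le m$.

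For (ii), the equality $n=m$ is immediate from (i): applying (i) to $G_1\cong\Z^n\rtimes F_1$ and $G_2\cong\Z^m\rtimes F_2$ gives $n\le m$, and applying it with the roles of $G_1$ and $G_2$ interchanged gives $m\le n$. For $F_1\cong F_2$ I would use a maximal-finite-subgroup invariant. By Lemma~\ref{profiniteproducts} we have $\widehat{G_i}\cong\widehat{\Z}^{n_i}\rtimes F_i$, and the projection onto $F_i$ has torsion-free kernel $\widehat{\Z}^{n_i}$; hence every finite subgroup of $\widehat{G_i}$ meets $\widehat{\Z}^{n_i}$ trivially and therefore embeds into $F_i$. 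Consequently the maximal order of a finite subgroup of $\widehat{G_i}$ equals $|F_i|$ (the bound is attained by the complement $F_i$), and any finite subgroup attaining this maximum is isomorphic to $F_i$, since an injection into $F_i$ between sets of equal order is a bijection. As an isomorphism $\widehat{G_1}\cong\widehat{G_2}$ carries finite subgroups to finite subgroups preserving order, it preserves this maximum and sends a maximal finite subgroup to a maximal one; comparing the two descriptions gives $F_1\cong F_2$.

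The routine but essential input I am assuming is the standard behaviour of profinite completions under passage to finite-index subgroups, namely that the closure $\overline N$ of a finite-index subgroup $N\le G$ is open in $\widehat G$ and isomorphic to $\widehat N$, which here gives $\overline N\cong\widehat{\Z}^n$. The only point requiring genuine care is the construction in (i): one must verify that the $k$-th power map lands in the abelian normal subgroup $\widehat{\Z}^m$ and that Proposition~\ref{rank} applies even though $\psi$ is a priori merely an abstract group homomorphism, which is exactly the generality in which that proposition is stated. The finite-subgroup argument in (ii) is the direct profinite analogue of the torsion-free-kernel argument in Lemma~\ref{factoriso}, so I expect no new difficulty there.
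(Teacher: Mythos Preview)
Your proof is correct and follows essentially the same route as the paper. For (i) the paper also passes to $\widehat{G_2}\cong\widehat{\Z}^m\rtimes F_2$ via Lemma~\ref{profiniteproducts}, embeds $\widehat{N}\cong\widehat{\Z}^n$ into $\widehat{G_1}$ (citing \cite[Corollary 2.8]{BridsonConder} for the finite-index closure fact you quote as folklore), and then uses the same $k$-th power map to land in $\widehat{\Z}^m$ before invoking Proposition~\ref{rank}. For (ii) the paper applies Lemma~\ref{factoriso} directly to $\widehat{\Z}^n\rtimes F_1\cong\widehat{\Z}^m\rtimes F_2$ to obtain $F_1\cong F_2$ and the two injections, then Proposition~\ref{rank}; your maximal-finite-subgroup argument and your symmetric use of (i) simply unpack that lemma inline, so there is no substantive difference.
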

\begin{proof}
We start by proving (i). By Lemma \ref{profiniteproducts} the profinite completion of $G_2$ is isomorphic to $\widehat{\Z}^m\rtimes F_2$. By assumption we have
$$\widehat{G_1}\cong\widehat{G_2}\cong \widehat{\Z}^m\rtimes F_1$$
Since $N\cong\Z^n$ has finite index in $G_1$ it follows from
\cite[Corollary 2.8]{BridsonConder} that $\widehat{N}\hookrightarrow\widehat{G_1}$.
In particular, we obtain $\varphi\colon\widehat{\Z}^n\hookrightarrow \widehat{\Z}^m\rtimes F_1$. Let $k=|F_2|$, then the map $\psi\colon \widehat{\Z}^n\to\widehat{\Z}^m\rtimes F_1$ defined by $\psi(a)=(\varphi(a))^k$ is an injective group homomorphism with image contained in $\widehat{\Z}^m\rtimes \left\{1\right\}$ (see the proof of Lemma \ref{factoriso}). It follows by Proposition \ref{rank} that $n\leq m$.

For the proof of (ii) we first note that the profinite completion of $G_1$ resp. $G_2$ is isomorphic to $\widehat{\Z}^n\rtimes F_1$ resp. $\widehat{\Z}^m\rtimes F_2$ (see Lemma \ref{profiniteproducts}). We apply Lemma \ref{factoriso} to conclude that $F_1\cong F_2$ and that there exist injective group homomorphisms $\varphi_1\colon\widehat{\Z}^n\to\widehat{\Z}^m$ and 
$\varphi_2\colon\widehat{\Z}^m\to\widehat{\Z}^n$. It follows that $n=m$ by Proposition \ref{rank}.
\end{proof}

\section{Proof of Theorem 1.2}
We start with the proof of two small but useful lemmata.

\begin{lemma}\label{product}
    Let $G_1=A_1\rtimes_{\varphi_1} F_1$ and $G_2=A_2\rtimes_{\varphi_2} F_2$. Then $G_1\times G_2\cong (A_1\times A_2)\rtimes_{\varphi_1\times \varphi_2} (F_1\times F_2)$, where $\varphi_1\times \varphi_2$ is the image of $(\varphi_1,\varphi_2)$ under the natural map $F_1\times F_2\to \Aut(A_1\times A_2)$.
\end{lemma}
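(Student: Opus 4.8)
The plan is to write down the obvious coordinate-shuffling map and verify it is a homomorphism by directly comparing the two multiplication rules. An element of $G_1\times G_2$ is a pair $\big((a_1,f_1),(a_2,f_2)\big)$ with $a_i\in A_i$ and $f_i\in F_i$, whereas an element of $(A_1\times A_2)\rtimes(F_1\times F_2)$ has the shape $\big((a_1,a_2),(f_1,f_2)\big)$. First I would define
$$\Psi\colon G_1\times G_2\to (A_1\times A_2)\rtimes_{\varphi_1\times\varphi_2}(F_1\times F_2),\qquad \Psi\big((a_1,f_1),(a_2,f_2)\big)=\big((a_1,a_2),(f_1,f_2)\big).$$
This is manifestly a bijection, since it merely reorders the four coordinates, so the entire content of the lemma is that $\Psi$ preserves the group operation.

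Next I would unwind the twisting homomorphism. Recall that multiplication in a semidirect product $A\rtimes_\varphi F$ is given by $(a,f)(a',f')=(a\,\varphi(f)(a'),ff')$. The key observation is that the automorphism $(\varphi_1\times\varphi_2)(f_1,f_2)$ of $A_1\times A_2$, being the image of $(f_1,f_2)$ under the natural map $F_1\times F_2\to\Aut(A_1\times A_2)$, acts componentwise:
$$(\varphi_1\times\varphi_2)(f_1,f_2)(a_1',a_2')=\big(\varphi_1(f_1)(a_1'),\varphi_2(f_2)(a_2')\big).$$
This componentwise description is really the only point requiring care; it is immediate once one recalls that $\Aut(A_1)\times\Aut(A_2)$ sits inside $\Aut(A_1\times A_2)$ as the subgroup acting diagonally on the two factors.

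Finally I would compute both sides. Multiplying factorwise in $G_1\times G_2$ gives $\big((a_1\varphi_1(f_1)(a_1'),f_1f_1'),(a_2\varphi_2(f_2)(a_2'),f_2f_2')\big)$, and applying $\Psi$ yields $\big((a_1\varphi_1(f_1)(a_1'),a_2\varphi_2(f_2)(a_2')),(f_1f_1',f_2f_2')\big)$. On the other hand, multiplying the images $\Psi(\cdots)$ in the target and invoking the componentwise formula above produces exactly the same element. Hence $\Psi$ is a homomorphism, and being bijective, an isomorphism. The argument is entirely routine: there is no genuine obstacle beyond the bookkeeping, the only place to stay attentive being the identification of $\varphi_1\times\varphi_2$ with the componentwise action.
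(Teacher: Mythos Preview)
Your proof is correct and follows exactly the approach the paper indicates: the paper's own proof simply states that ``the natural bijective map between the two sets is a homomorphism due to the definition of $\varphi_1\times\varphi_2$,'' and your argument is precisely a careful unpacking of that sentence. You have supplied the routine bookkeeping the paper omits, with no deviation in strategy.
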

\begin{proof}
    This is an easy exercise, the natural bijective map between the two sets is a homomorphism due to the definition of $\varphi_1\times \varphi_2$. 
\end{proof}
Another ingredient for the proof will be the following observation
\begin{lemma}\label{NoFiniteNormal}
    Let $G=A\rtimes F$ for a torsion free abelian group $A$ and $F$ a finite group. Assume that $G$ is finitely generated and residually finite. If $G$ does not have non-trivial finite normal subgroups, then $\widehat{G}$ does not have non-trivial finite normal subgroups either.
\end{lemma}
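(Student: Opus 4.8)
The plan is to compute $\widehat{G}$ explicitly and then to reformulate both the hypothesis and the conclusion as a single faithfulness condition on the $F$-action which is insensitive to profinite completion.

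First I would record the relevant structure. Since $F$ is finite and $G=A\rtimes F$ is finitely generated, $A$ is a normal subgroup of index $|F|$, hence itself finitely generated; being torsion-free abelian it is $A\cong\Z^n$ for some $n$. By Lemma \ref{profiniteproducts} (applied with $\widehat{F}=F$) there is a topological, in particular abstract, isomorphism $\widehat{G}\cong\widehat{\Z}^n\rtimes F$, and $\widehat{\Z}^n=(\prod_p\Z_p)^n$ is torsion-free. Thus on both the discrete and the profinite side we are looking at a semidirect product $R\rtimes F$ with $R$ torsion-free abelian (namely $R=\Z^n$, respectively $R=\widehat{\Z}^n$), and the finite normal subgroups of $\widehat{G}$ are exactly those of $\widehat{\Z}^n\rtimes F$.

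The heart of the argument is a rigidity statement valid for any torsion-free abelian $R$. Let $K\trianglelefteq R\rtimes F$ be finite and let $\pi$ be the projection onto $F$. Its kernel $R$ is torsion-free, so $K\cap R=\{1\}$ and $\pi$ embeds $K$ into $F$. For $(v,f)\in K$, conjugating by $(w,1)$ yields $(v+w-f(w),f)\in K$ for every $w\in R$; since $K$ is finite and $w\mapsto w-f(w)$ is an endomorphism of $R$, its image is a finite subgroup of the torsion-free group $R$, hence trivial, so $f$ fixes $R$ pointwise. Consequently $\pi(K)\subseteq F_0:=\ker(F\to\Aut(R))$, which together with $K\cap R=\{1\}$ forces $K=\{1\}$ whenever the action is faithful. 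Conversely, $\{0\}\times F_0$ is directly checked to be a finite normal subgroup of $R\rtimes F$, and it is non-trivial once the action fails to be faithful. Therefore $R\rtimes F$ has no non-trivial finite normal subgroup if and only if $F$ acts faithfully on $R$.

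Finally I would link the two sides: an element of $F$ acts on $\widehat{\Z}^n$ by the same integer matrix by which it acts on $\Z^n\subseteq\widehat{\Z}^n$, so $\ker(F\to\Aut(\Z^n))=\ker(F\to\Aut(\widehat{\Z}^n))$ and faithfulness on $\Z^n$ is equivalent to faithfulness on $\widehat{\Z}^n$. Chaining the equivalences, $G$ has no non-trivial finite normal subgroup $\iff$ $F$ acts faithfully on $\Z^n$ $\iff$ $F$ acts faithfully on $\widehat{\Z}^n$ $\iff$ $\widehat{G}$ has no non-trivial finite normal subgroup, which proves the lemma (indeed as an equivalence). The step needing the most care is the rigidity argument, where finiteness of $K$ must be converted into triviality of the endomorphisms $w\mapsto w-f(w)$; this is exactly the point at which torsion-freeness of $\widehat{\Z}=\prod_p\Z_p$ is indispensable, while everything else is routine manipulation of the semidirect-product multiplication.
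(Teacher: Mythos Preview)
Your argument is correct and rests on the same core computation as the paper's proof: conjugating a putative finite normal subgroup by elements of the abelian factor forces its projection to $F$ to act trivially. The paper proceeds more directly---given a finite normal $N\trianglelefteq\widehat{G}$, it shows $\pi(N)$ acts trivially (your computation), then verifies by a second conjugation that $\pi(N)$ is itself normal in $\widehat{G}$, and finally uses the embedding $G\hookrightarrow\widehat{G}$ together with $\pi(N)\subseteq F\subseteq G$ to conclude $\pi(N)$ is a non-trivial finite normal subgroup of $G$. You instead package the computation as a general criterion---$R\rtimes F$ has no non-trivial finite normal subgroups iff $F$ acts faithfully on $R$---and then observe that the kernel of the $F$-action is the same whether one acts on $\Z^n$ or on $\widehat{\Z}^n$. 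This yields the statement as an equivalence rather than a one-way implication, and replaces the paper's second conjugation step by the trivial check that $\{0\}\times F_0$ is always normal. Both routes invoke Lemma~\ref{profiniteproducts} and torsion-freeness of $\widehat{\Z}^n$ in the same places; your framing is a bit more conceptual, the paper's a bit more hands-on.
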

\begin{proof}
The profinite completion of $\widehat{G}$ is isomorphic to $\widehat{A}\rtimes_\varphi F$ (see Lemma \ref{profiniteproducts}). Now suppose there exists a finite normal subgroup $N$ in $\widehat{G}$. We have the natural projection $\pi\colon\widehat{G}\to F$. Moreover, $\pi(N)\neq \{1\}$, since $\widehat{A}$ is torsion free by \cite[Proposition 2.1]{Kropholler} and $N$ is finite.

There are two main steps in the proof which are done by explicit calculations:\\
\underline{Step 1:} $\varphi(\pi(N))=\{id\}$.

Note that $\pi_{\mid_N}$ is injective since $\widehat{A}$ is torsion free. Let $n\in N$ with $n=(a,f)$ and $b\in \widehat{A}$ denote arbitrary elements. Then we obtain
$$(b,1)\cdot (a,f)\cdot (b,1)^{-1} = (ba\varphi(f)(b^{-1}),f)$$
Since the restriction of $\pi$ to $N$ is injective, we obtain $a=ba\varphi(f)(b^{-1})$ and since $\widehat{A}$ is abelian, $\varphi(f)(b)=b$, hence completing step 1.

\underline{Step 2:} $\pi(N)$ is normal in $\widehat{G}$.

Let $g\in \widehat{G}$ denote an arbitrary element. We write $g=(b,f_1)$ and let $(1,f)\in \pi(N)$. We obtain
$$(b,f_1)(1,f)(b,f_1)^{-1}=(b\varphi(f_1ff_1^{-1})(b^{-1}),f_1ff_1^{-1})$$
Since $f_1ff_1^{-1}\in \pi(N)$, we conclude by Step 1, that $\varphi(f_1ff_1^{-1})(b^{-1})=b^{-1}$ and hence
$$g(1,f)g^{-1}=(bb^{-1},f_1ff_1^{-1})=(1,f_1ff_1^{-1})\in \pi(N)$$

Since $G$ embeds in $\widehat{G}$, and $\pi(N)\subseteq F$, we conclude, that $\pi(N)$ is also normal in $G$. Since $N$ is finite and $\pi(N)$ is non-trivial, we have reached a contradiction.
\end{proof}

We can now prove Theorem 1.2.
\begin{proof}[Proof of Theorem 1.2]
Let $W_\Gamma$ be a just infinite Coxeter group. Theorem \ref{JustInfiniteCoxeter} tells us that $\Gamma$ is isomorphic to one of the graphs in Figure 1. Therefore by Proposition \ref {IrrAffineAb} $W_\Gamma\cong\mathbb{Z}^k\rtimes F$ and moreover by Lemma \ref{profiniteproducts} $\widehat{W_\Gamma}\cong\widehat{\mathbb{Z}}^k\rtimes F$. 

Assume that there exists a Coxeter group $W_\Omega$ such that $\widehat{W_\Gamma}\cong\widehat{W_\Omega}$. Since $W_\Omega\hookrightarrow\widehat{W_\Omega}$ and $\widehat{W_\Omega}\cong\widehat{W_\Gamma} $ is virtually abelian, we know that $W_\Omega$ is also virtually abelian. By the characterization of virtually abelian Coxeter groups, see Theorem \ref{FiniteVirtuallyAbelian}, there exist a finite Coxeter group $W_{\Omega_1}$ and $W_{\Gamma_1},\ldots, W_{\Gamma_n}$ where $\Gamma_i$ are from Figure 1 such that 
$$W_\Omega\cong W_{\Omega_1}\times W_{\Gamma_1}\times\ldots\times W_{\Gamma_n}$$
Applying Lemma \ref{profiniteproducts} we obtain
$$\widehat{W_\Gamma}\cong\widehat{W_\Omega}\cong W_{\Omega_1}\times \widehat{W_{\Gamma_1}}\times\ldots\times \widehat{W_{\Gamma_n}}$$
The first step in the proof is to show that $W_{\Omega_1}$ is in fact trivial. By Theorem \ref{JustInfiniteCoxeter} we know that $W_\Gamma$ does not have non-trivial finite normal subgroups, hence by Lemma \ref{NoFiniteNormal} we know that $\widehat{W_\Gamma}$ and hence $\widehat{W_\Omega}$ don't either. So $W_{\Omega_1}$ has to be trivial.

The next step is to ensure that in the decomposition $\widehat{W_\Omega}\cong\widehat{W_{\Gamma_1}}\times\ldots\times \widehat{W_{\Gamma_n}}$, we actually have $n=1$, that is, there appears only 1 factor. To do so, we invoke Lemma \ref{abelianization}. Together with the classification of the affine irreducible Coxeter groups and their abelianizations, see Proposition \ref{IrrAffineAb}, this implies that the abelianization of $W_\Gamma$ contains a factor isomorphic to $(\Z/2\Z)^n$ since each factor $W_{\Gamma_i}$ contributes at least one factor $\Z/2\Z$. Since $W^{ab}_\Gamma\cong(\mathbb{Z}/2\mathbb{Z})^t$ where $t\leq 3$ we obtain $n\leq 3$.

Since we have a complete list of possibilities for $W_\Gamma$ we can conclude that $W_\Omega$ consists of only one factor unless possibly $W_\Gamma$ is of type $\widetilde{A}_1,\widetilde{B}_m, \widetilde{F}_4, \widetilde{G}_2$ or $\widetilde{C}_l$ for $m\geq 3$, $l\geq 2$. 

Assume for contradiction that $n\geq 2$. Then we have 
$W_\Omega\cong W_{\Gamma_1}\times W_{\Gamma_2}$ or
$W_\Omega\cong W_{\Gamma_1}\times W_{\Gamma_2}\times W_{\Gamma_3}$
where $W_{\Gamma_i}\cong \mathbb{Z}^{a_i}\rtimes W_{\Delta_i}$, $W_{\Delta_i}$ is a finite Coxeter group.

By Lemma \ref{product}
$$W_\Omega\cong (\Z^{a_1}\times \Z^{a_2})\rtimes (W_{\Delta_1}\times W_{\Delta_2}) $$
or
$$W_\Omega\cong (\Z^{a_1}\times \Z^{a_2}\times \Z^{a_3})\rtimes(W_{\Delta_1}\times W_{\Delta_2}\times W_{\Delta_3})$$

Moreover by Lemma \ref{factoriso} we know that $F\cong W_{\Delta_1}\times W_{\Delta_2}$  or $F\cong W_{\Delta_1}\times W_{\Delta_2}\times W_{\Delta_3}$. In particular $F$ is directly decomposable into a product of non-trivial finite Coxeter groups. Now we can apply Theorem \ref{Nuida} to see that $F\cong W_{G_2}\cong\Z/2\Z\times W_{A_2}$ or $F\cong W_{B_{2k+1}}\cong \Z/2\Z\times W_{D_{2k+1}}$. Thus:
\begin{enumerate}
\item If $F\cong W_{G_2}$, then $W_\Gamma\cong\Z^2\rtimes W_{G_2}$ and $W_\Omega\cong(\Z\rtimes\Z/2\Z)\times(\Z^2\rtimes W_{A_2})$. Hence $\widehat{W_\Gamma}$ is virtually $\widehat{\Z}^2$ and $\widehat{W_\Omega}$ is virtually $\widehat{\Z}^3$ and these groups can not be isomorphic by Lemma \ref{VirAbelian}. Hence $n=1$.
\item If $F\cong W_{B_{2k+1}}$, then a similar argumentation as above shows that $n=1$.
\end{enumerate}
So far we have proved that $W_\Omega\cong\Z^{a_1}\rtimes W_{\Delta_1}$. 

Since $\widehat{W_\Gamma}\cong\widehat{W_\Omega}$, via Lemma \ref{abelianization}
we obtain $W^{ab}_\Gamma\cong W^{ab}_\Omega$, by Lemma \ref{VirAbelian} we get $|F|=|W_{\Delta_1}|$ and  $k=a_1$. Hence applying Corollary \ref{AffineCoxeterGroups} we conclude $W_\Gamma\cong W_\Omega$ which finishes the proof.
\end{proof}

\section{Profinite rigidity of the infinite Dihedral group}
Let $\Gamma$ be of type $\widetilde{A}_1$. Then the Coxeter group $W_\Gamma$ is known as the infinite Dihedral group. The next result is known in the community. Since we could not find a good reference we add the proof of it here.
\begin{proposition}
The infinite Dihedral group $D_\infty$ is profinitely rigid in the absolute sense.
\end{proposition}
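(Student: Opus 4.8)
The plan is to exploit that $\widehat{D_\infty}$ is virtually procyclic, which should force any group with the same profinite completion to be virtually $\Z$, and then to classify the finitely many possibilities by their abelianizations. Throughout, write $D_\infty\cong\Z\rtimes\Z/2\Z$ with $\Z/2\Z$ acting by inversion; by Lemma \ref{profiniteproducts} its profinite completion is $\widehat{D_\infty}\cong\widehat{\Z}\rtimes\Z/2\Z$, in which $\widehat{\Z}$ is an open normal subgroup of index $2$ that is torsion free (as $\widehat{\Z}$ is torsion free).

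First I would set up the candidate group. Let $H$ be any finitely generated residually finite group with $\widehat{H}\cong\widehat{D_\infty}$; since $H\hookrightarrow\widehat{H}$ and $\widehat{H}$ is infinite, $H$ is infinite. Let $C\trianglelefteq\widehat{H}$ be the open index-$2$ subgroup corresponding to $\widehat{\Z}\trianglelefteq\widehat{D_\infty}$, and put $N:=H\cap C$ under the embedding $H\hookrightarrow\widehat{H}$. As $H$ is dense in $\widehat{H}$ it surjects onto $\widehat{H}/C\cong\Z/2\Z$, so $N$ is normal of index exactly $2$ in $H$; being a subgroup of the torsion-free abelian group $C$ it is torsion-free abelian, and being of finite index in the finitely generated group $H$ it is finitely generated. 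Hence $N\cong\Z^d$ for some $d\geq 1$.

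The crux is to pin down $d=1$. Here I would apply Lemma \ref{VirAbelian}(i) with $G_1=H$, its finite-index normal subgroup $N\cong\Z^d$, and $G_2=D_\infty\cong\Z\rtimes\Z/2\Z$ (so $m=1$): from $\widehat{G_1}\cong\widehat{G_2}$ it yields $d\leq 1$, whence $d=1$ and $N\cong\Z$. This rank computation — transporting a finite-index cyclic subgroup across the profinite isomorphism and comparing ranks via Proposition \ref{rank} — is the main obstacle, since profinite rigidity in the absolute sense forbids any use of Coxeter structure on $H$, so all structural information must be extracted from $\widehat{H}$ alone.

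It then remains elementary algebra. Now $H$ is an extension $1\to\Z\to H\to\Z/2\Z\to 1$, and the conjugation action of $\Z/2\Z$ on $\Z$ is by $\pm 1$. If it is trivial then $H$ is abelian of the form $\Z$ or $\Z\times\Z/2\Z$; if it is inversion, a short computation with a lift $s$ of the generator (which forces $s^2=1$) gives $H\cong D_\infty$. Finally, Lemma \ref{abelianization} gives $H^{ab}\cong D_\infty^{ab}\cong(\Z/2\Z)^2$; among the three candidates only $D_\infty$ has abelianization $(\Z/2\Z)^2$, since $\Z^{ab}\cong\Z$ and $(\Z\times\Z/2\Z)^{ab}\cong\Z\times\Z/2\Z$. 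Therefore $H\cong D_\infty$, which completes the proof.
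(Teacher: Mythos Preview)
Your proof is correct and shares the paper's opening moves: both compute $\widehat{D_\infty}\cong\widehat{\Z}\rtimes\Z/2\Z$, embed the candidate group into it, and invoke Lemma~\ref{VirAbelian}(i) to force the free abelian normal subgroup to have rank~$1$. The endgames, however, differ. The paper, knowing only that $G$ is virtually~$\Z$, appeals to the Farrell--Jones dichotomy for virtually infinite cyclic groups (either $F\rtimes\Z$ with $F$ finite, or a finite-kernel surjection onto $D_\infty$), then finishes the second case with \cite[Lemma~4.1]{BridsonConder} and rules out nontrivial~$F$ in the first case via Lemma~\ref{NoFiniteNormal}. You instead exploit density of $H$ in $\widehat{H}$ to see that the pullback of the open subgroup $\widehat{\Z}$ has index \emph{exactly}~$2$, so $H$ sits in a short exact sequence $1\to\Z\to H\to\Z/2\Z\to 1$; you then classify the three such extensions by hand and separate them via Lemma~\ref{abelianization}. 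Your route is more elementary and self-contained, avoiding both external citations; the paper's route is more structural and would adapt more readily if the finite top quotient were larger than~$\Z/2\Z$. One small slip worth fixing: the implication ``$H\hookrightarrow\widehat{H}$ and $\widehat{H}$ infinite, hence $H$ infinite'' is not valid as stated; the correct reason is that if $H$ were finite then $\widehat{H}=H$ would be finite.
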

\begin{proof}
The profinite completion of $D_\infty$ is isomorphic to $\widehat{\Z}\rtimes\Z/2\Z$ by Lemma \ref{profiniteproducts}.
Let $G$ be a finitely generated residually finite group with $\widehat{G}\cong \widehat{\Z}\rtimes\Z/2\Z$.
Since $G\hookrightarrow \widehat{\Z}\rtimes\Z/2\Z$, it follows that $G$ is a virtually  abelian group. By assumption $G$ is finitely generated, thus $G$ is virtually free abelian. Hence there exists $N\trianglelefteq G$ such that $N\cong\Z^n$ and $N$ is of finite index in $G$. By Lemma \ref{VirAbelian} it follows that $n=1$ and therefore $G$ is virtually infinite cyclic. It was shown in \cite[Lemma 2.5]{Farell} that a virtually infinite cyclic group $H$ has the form $F\rtimes\Z$ where $F$ is finite, or $H$ maps onto $D_\infty$ with a finite kernel. 

Assume first that $G$ maps onto $D_\infty$, then we apply \cite[Lemma 4.1]{BridsonConder} to conclude that $G\cong D_\infty$.
And if $G\cong F\rtimes\Z$ where $F$ is non-trivial finite, then $\widehat{G}\cong F\rtimes\widehat{\Z}$. In particular $\widehat{G}$ has a finite non-trivial normal subgroup $F$, but $\widehat{D_\infty}$ does not have any non-trivial finite normal subgroups by Lemma \ref{NoFiniteNormal}. The last case to consider is where $G\cong\mathbb{Z}$. In this case $\widehat{G}$ is abelian and therefore $\widehat{G}$ can not be isomorphic to $\widehat{D_\infty}$.
\end{proof}

\end{document}